 \DeclareMathOperator{\SL}{SL}
\DeclareMathOperator{\Ext}{Ext} \DeclareMathOperator{\Ann}{Ann}
 \DeclareMathOperator{\Hom}{Hom}
\DeclareMathOperator{\GL}{GL} \DeclareMathOperator{\Gal}{Gal}
\begin{document}
\theoremstyle{plain}
\newtheorem{MainThm}{Theorem}
\renewcommand{\theMainThm}{\Alph{MainThm}}
\newtheorem{MainCor}{Corollary}
\renewcommand{\theMainCor}{\Alph{MainCor}}
\newtheorem*{trm}{Theorem}
\newtheorem*{lem}{Lemma}
\newtheorem*{prop}{Proposition}
\newtheorem*{defn}{Definition}
\newtheorem*{thm}{Theorem}
\newtheorem*{example}{Example}
\newtheorem*{cor}{Corollary}
\newtheorem*{conj}{Conjecture}
\newtheorem*{hyp}{Hypothesis}
\newtheorem*{thrm}{Theorem}
\newtheorem*{quest}{Question}
\theoremstyle{remark}
\newtheorem*{rem}{Remark}
\newtheorem*{rems}{Remarks}
\newtheorem*{notn}{Notation}
\newcommand{\Fp}{\mathbb{F}_p}
\newcommand{\Fq}{\mathbb{F}_q}
\newcommand{\Zp}{\mathbb{Z}_p}
\newcommand{\Qp}{\mathbb{Q}_p}
\newcommand{\Kr}{\mathcal{K}}
\newcommand{\Rees}[1]{\widetilde{#1}}
\newcommand{\invlim}{\lim\limits_{\longleftarrow}}
\newcommand{\Md}[1]{\mathcal{M}(#1)}
\newcommand{\Pj}[1]{\mathcal{P}(#1)}
\newcommand{\G}[2]{\mathcal{G}_{#1}(#2)}

\title{Centres of skewfields and completely faithful Iwasawa modules}
\author{Konstantin Ardakov}

\subjclass[2000]{11G05, 11R23, 12E15, 16D70}
\begin{abstract} Let $H$ be a torsionfree compact $p$-adic analytic group whose Lie algebra is split semisimple. We show that the quotient skewfield of fractions of the Iwasawa algebra $\Lambda_H$ of $H$ has trivial centre and use this result to classify the prime $c$-ideals in the Iwasawa algebra $\Lambda_G$ of $G := H \times \Zp$. We also show that a finitely generated torsion $\Lambda_G$-module having no non-zero pseudo-null submodule is completely faithful if and only if it is has no central torsion. This has an application to the study of Selmer groups of elliptic curves.
\end{abstract}

\maketitle
\let\le=\leqslant  \let\leq=\leqslant
\let\ge=\geqslant  \let\geq=\geqslant

\section{Introduction}

\subsection{Iwasawa algebras}
Let $p$ be a prime number. In this note we are concerned with modules over the Iwasawa algebra
\[\Lambda_G:=\lim\limits_{\longleftarrow}\!_{N\triangleleft_oG} \Zp[G/N]\]
of a compact $p$-adic analytic group $G$. These groups frequently occur as images of Galois representations on $p$-power division points of abelian varieties and act on various arithmetic objects of interest such as ideal class groups and Selmer groups. These arithmetic objects then naturally become modules over the associated Iwasawa algebra. By examining the structure of these modules in detail, it is sometimes possible to obtain precise arithmetic information.

\subsection{Selmer groups} As a concrete example of the philosophy outlined above in action, consider the following situation. Let $E$ be an elliptic curve over a number field $F$ and assume $E$ has no complex multiplication. Let $E_{p^\infty}$ denote the group of all $p$-power division points on $E$, let $F_{\infty} = F(E_{p^\infty})$ and let $G = \Gal(F_{\infty} / F)$. By a well-known theorem of Serre \cite{Serre}, $G$ is an open subgroup of $\GL_2(\Zp)$. In this setting, Coates, Schneider and Sujatha define the \emph{Selmer group} $\mathcal{S}(E/F_\infty)$ \cite[Definition 8.1]{CSS} and study the Pontryagin dual $X(E/F_{\infty})$ of $\mathcal{S}(E/F_\infty)$. They show that $X(E/F_\infty)$ is always a finitely generated $\Lambda_G$-module. Moreover, they show that under suitable assumptions on $E$ and $p$, $X(E/F_\infty)$ is in fact finitely generated and torsionfree over $\Lambda_H$ where $H$ is a certain closed normal subgroup of $G$ satisfying $G/H \cong \Zp$.    They also posed a number of questions concerning the structure of $X(E/F_\infty)$ as a $\Lambda_G$-module, including the following:
\begin{itemize}
\item Let $Z$ denote the centre of $G$. Is $X(E/F_\infty)$ torsionfree as a $\Lambda_Z$-module?
\item Is $q(X(E/F_\infty))$ completely faithful?
\end{itemize}

\subsection{Completely faithful modules}\label{Main} Let $R$ be a
Noetherian domain, which is a maximal order in its skewfield of
fractions. Let $\mathcal{M}$ denote the category of all finitely
generated $R$-modules, let $\mathcal{C}$ denote the full subcategory
of all pseudo-null modules in $\mathcal{M}$ and let $q : \mathcal{M}
\to \mathcal{M} / \mathcal{C}$ denote the quotient functor. Recall
\cite[\S 5]{CSS} that the \emph{annihilator ideal} of an object $A
\in \mathcal{M}/\mathcal{C}$ is defined by
\[\Ann(A) := \sum\{\Ann_R(M) : q(M) \cong A\},\]
and that $A$ is said to be \emph{completely faithful} if $\Ann(B)=0$
for any non-zero subquotient $B$ of $A$. The purpose of this note is
to prove the following
\begin{thm} Let $p \geq 5$, let $H$ be a torsionfree compact $p$-adic analytic group whose Lie algebra $\mathcal{L}(H)$ is split semisimple over $\Qp$ and let $G = H\times Z$ where $Z \cong \Zp$. Then $\Lambda_G$ is a maximal order. Let $M$ be a finitely generated torsion $\Lambda_G$-module, which has no non-zero pseudo-null submodules. Then $q(M)$ is completely faithful if and only if $M$ is torsionfree over $\Lambda_Z$.
\end{thm}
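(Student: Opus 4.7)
The plan is to reduce the theorem to a classification of the prime $c$-ideals of $\Lambda_G$, which will rest on the centrality statement $Z(Q(\Lambda_H)) = \Qp$ established elsewhere in the paper. Since $G = H \times Z$ and $\Lambda_Z \cong \Zp[[T]]$, there is a natural identification $\Lambda_G \cong \Lambda_H[[T]]$; I would invoke the (known) fact that $\Lambda_H$ is a maximal order in its Goldie skewfield of fractions, together with the preservation of the maximal-order property under formal power series extensions, to conclude that $\Lambda_G$ is also a maximal order.

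Using the centrality theorem, the centre of the Goldie skewfield $Q(\Lambda_G)$ equals $Q(\Lambda_Z)$. Combining this with the general theory relating prime $c$-ideals in a maximal order to discrete valuations of its central subfield, I would deduce that every prime $c$-ideal of $\Lambda_G$ contains a nonzero element of $\Lambda_Z$; equivalently, every nonzero two-sided $c$-ideal of $\Lambda_G$ has nonzero intersection with $\Lambda_Z$. This is the key structural input feeding into the main equivalence.

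For the main equivalence, the useful reformulation is that $q(M)$ is completely faithful if and only if every simple subquotient of the finite-length object $q(M) \in \mathcal{M}/\mathcal{C}$ has zero annihilator (this follows from Jordan--Hölder together with the fact that annihilators of subquotients contain annihilators of the ambient object). The easy direction is that $\Lambda_Z$-torsion in $M$ produces such a nonzero annihilator: choose $0 \neq f \in \Lambda_Z$ with $M[f] \neq 0$; the no-pseudo-null-submodule hypothesis on $M$ forces $q(M[f])$ to be a nonzero subobject of $q(M)$, and every simple subquotient of $q(M[f])$ is annihilated by $f$. The converse requires showing that a simple subquotient of $q(M)$ with nonzero annihilator forces $\Lambda_Z$-torsion in $M$. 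By the classification, such a simple has annihilator containing $f\Lambda_G$ for some $0 \neq f \in \Lambda_Z$; invoking the structure theorem for finitely generated torsion modules over the maximal order $\Lambda_G$ having no nonzero pseudo-null submodule, one then sees that $M$ must contain a nonzero $f$-primary submodule, and in particular a nonzero element annihilated by a power of $f$.

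The main obstacle will be the classification of prime $c$-ideals of $\Lambda_G$ in the second step, which rests essentially on the centrality theorem for $Q(\Lambda_H)$ together with the theory of valuations on noncommutative maximal orders; this is the step where the hypothesis on the Lie algebra of $H$ and the trivial-centre result are genuinely used. A secondary technical difficulty is in the converse direction of the equivalence, where the no-pseudo-null-submodule hypothesis on $M$ must be used carefully, via the structure theory of torsion modules over maximal orders, to transfer the vanishing of annihilators in $\mathcal{M}/\mathcal{C}$ back to honest torsion relations in $M$ itself.
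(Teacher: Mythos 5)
Your overall architecture (maximal order, classify the prime $c$-ideals of $\Lambda_G$, then run a Chamarie-type decomposition of $q(M)$ into completely faithful plus bounded parts) matches the paper, and your ``easy'' direction is correct as stated. But there is a genuine gap at the step you yourself flag as the main obstacle: the claim that every prime $c$-ideal of $\Lambda_G$ meets $\Lambda_Z$ nontrivially does \emph{not} follow from $Z(Q(\Lambda_H)) = \Qp$ together with ``the general theory relating prime $c$-ideals in a maximal order to discrete valuations of its central subfield.'' What Asano's theory gives you is that each prime $c$-ideal $P$ of a maximal order induces a discrete valuation $v_P$ on the centre of the quotient skewfield; it gives you no reason why that valuation should be nontrivial, i.e.\ no reason to exclude a prime $c$-ideal $P$ with $P \cap \Lambda_Z = 0$. (Your intermediate assertion that $Z(Q(\Lambda_G)) = Q(\Lambda_Z)$ is also unproven --- the centre of a quotient skewfield need not be the quotient field of the centre --- and in any case it would not close the gap.) The paper excludes such primes by an entirely different mechanism: it localises $\Lambda_G$ at the Ardakov--Brown semiprime ideal $I_H$, identifies the completion of $\Omega_{G,H}$ with the completed group algebra $K[[H]]$ over the fraction field $K$ of $\Omega_Z$, and invokes the Ardakov--Wei--Zhang theorem that $K[[H]]$ has no nontrivial reflexive two-sided ideals; a faithful flatness and Nakayama argument then shows that a prime $c$-ideal either contains $p$ or survives localisation, forcing $\Lambda_G/I$ to be finitely generated over $\Lambda_H$. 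Only \emph{after} that reduction does the centrality theorem enter, via a commutator and minimal-degree argument in $Q(\Lambda_H)[z]$ showing $I \cap \Lambda_H \neq 0$ or $I \cap \Lambda_Z \neq 0$. Note also that in the paper the logical order is reversed from yours: the centrality of $Q(\Lambda_H)$ is itself \emph{deduced} from the classification of prime $c$-ideals of $\Lambda_H$ (again via the $K[[H]]$ theorem), not used as the primary input.

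Two smaller points. First, your route to the maximal order property (power series extensions of maximal orders) is not the paper's, which filters $\Lambda_G$ $p$-adically and lifts the maximal order property from $\gr \Lambda_G \cong \Omega_G[t]$ through the Zariskian filtration; if you want to use a power-series permanence statement you should cite or prove one valid for noncommutative Noetherian domains. Second, in your converse direction, passing from ``a simple subquotient of $q(M)$ has nonzero annihilator'' to ``$M$ has nonzero $\Lambda_Z$-torsion'' genuinely needs the bounded part to be a direct \emph{summand} of $q(M)$ (so that it lifts to a submodule of $M$); a quotient of $M$ can be $f$-torsion without $M$ containing any $f$-torsion. This is exactly what Chamarie's decomposition $q(M) = F \oplus B$ provides, so you should make that dependence explicit rather than appealing loosely to ``the structure theory.''
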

The proof is given in $\S$\ref{Proof}. We should point out that if $M$ is a finitely generated $\Lambda_G$-module which is also finitely generated over $\Lambda_H$, then $M$ is automatically $\Lambda_G$-torsion. Moreover, in this situation, $M$ has no nonzero pseudo-null submodules if and only if $M$ is torsionfree over $\Lambda_H$.

Theorem \ref{Main} should be compared with \cite[Theorem 6.3]{Ven}, which states that if instead $G$ is a semidirect product of two copies of $\Zp$, then $q(M)$ is completely faithful whenever $M$ is a finitely generated $\Lambda_G$-module which is finitely generated over $\Lambda_H$.

The assumptions on the group $G$ in Theorem \ref{Main} are fairly mild. For example, any open pro-$p$ subgroup $G$ of $\GL_2(\Zp)$ satisfies them whenever $p\geq 3$: just take $H = G \cap \SL_2(\Zp)$. Moreover, if $G$ is just assumed to have a closed normal subgroup $H$ such that $\mathcal{L}(H)$ is semisimple and such that $G/H \cong \Zp$, then by passing to an open subgroup we can ensure that in fact $G = H \times \Zp$. This is because the only extension of a semisimple Lie algebra by a one-dimensional Lie algebra is the trivial extension.

\subsection{Selmer groups revisited} Returning to our $\GL_2$ example, Theorem \ref{Main} implies that $q(X(E/F_\infty))$ must necessarily be completely faithful, provided $X(E/F_\infty)$ is finitely generated over $\Lambda_H$ and torsionfree over both $\Lambda_H$ and $\Lambda_Z$. Although $X(E/F_\infty)$ is known to be finitely generated over $\Lambda_H$ in many circumstances, there are at present no known examples where one can prove that $X(E/F_\infty)$ is torsionfree over $\Lambda_Z$.

We should remark at this point that Coates \emph{et al} conjecture that $Y(E/F_\infty) := X(E/F_\infty) / X(E/F_\infty)(p)$ is finitely generated over $\Lambda_H$ whenever $p \geq 5$ and $E$ has good ordinary reduction at $p$ \cite[Conjecture 5.1]{CFKSV}. Here $X(E/F_\infty)(p)$ denotes the $p$-torsion submodule of $X(E/F_\infty)$.

\subsection{Centres of skewfields and prime c-ideals} In order to prove Theorem \ref{Main}, we compute the centre $Z(Q(\Lambda_H))$ of the quotient skewfield of fractions $Q(\Lambda_H)$ of $\Lambda_H$, thereby answering a question of Venjakob
 \cite[Question 6.4]{Ven}. We also classify \emph{all} the prime c-ideals in both $\Lambda_H$ and $\Lambda_G$:

\begin{thm} Under the assumptions of Theorem \ref{Main}, the centre of $Q(\Lambda_H)$ is equal to $\Qp$, the only prime c-ideal of $\Lambda_H$ is $p\Lambda_H$ and every prime c-ideal of $\Lambda_G$ apart from $p\Lambda_G$ is generated by a distinguished polynomial in $\Lambda_Z$.
\end{thm}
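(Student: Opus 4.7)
My plan is to reduce everything to the computation of the centre $Z(Q(\Lambda_H))$, which is the main technical content; the classifications of prime c-ideals then follow from the structure theory of Noetherian maximal orders. After shrinking $H$ if necessary, I may assume it is a uniform pro-$p$ group with $\Zp$-Lie algebra $\mathcal{L}(H)$.

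To show $Z(Q(\Lambda_H)) = \Qp$, I would exploit the inner conjugation action of $H$ on $\Lambda_H$ (and hence on $Q(\Lambda_H)$), which any central element $z$ must fix; differentiating gives invariance of $z$ under the adjoint action of $\mathcal{L}(H)$. Since $\mathcal{L}(H)$ is split semisimple, fix a Cartan subalgebra $\mathfrak{t}$ with simple roots $\{\alpha_i\}$ and attached $\mathfrak{sl}_2$-triples $(e_i, h_i, f_i)$. The natural filtration on $\Lambda_H$ coming from its uniform structure has associated graded ring essentially the universal enveloping algebra of the $\Fp$-Lie algebra $\mathcal{L}(H)/p\mathcal{L}(H)$, and the filtration respects the adjoint $\mathcal{L}(H)$-action. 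The weight-zero condition on $z$ under $\mathrm{ad}(\mathfrak{t})$, together with annihilation by each $\mathrm{ad}(e_i)$ and $\mathrm{ad}(f_i)$, should force $z$ to lie in the centralizer of a regular semisimple element of $\mathfrak{t}$ inside $Q(\Lambda_H)$; this centralizer can then be shown to equal $\Qp$ by direct computation. The delicate point is to rule out the lifts of the ``Casimir-type'' elements of the classical centre $Z(U(\mathcal{L}(H)))$: one must show that when lifted to $\Lambda_H$, they fail to commute with the higher-order terms introduced by the completion, and so do not survive as genuine central elements.

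Once $Z(Q(\Lambda_H)) = \Qp$ is established, the classification of prime c-ideals is essentially formal. Since $\Lambda_H$ is a Noetherian maximal order (which is either known or a straightforward variant of Theorem~\ref{Main}), its height-one reflexive prime ideals correspond bijectively to the discrete valuations on $Q(\Lambda_H)$ that are nonnegative on $\Lambda_H$. Any such valuation restricts nontrivially to the centre $\Qp$, because every such reflexive prime contracts to a non-zero ideal of $\Zp = Z(\Lambda_H)$, and the only non-trivial discrete valuation on $\Qp$ is the $p$-adic one; this yields $p\Lambda_H$ as the unique prime c-ideal of $\Lambda_H$. For $G = H \times Z$, the same kind of argument combined with the centrality of $Z$ in $G$ gives $Z(Q(\Lambda_G)) = Q(\Lambda_Z) \cong \mathrm{Frac}(\Zp[[T]])$. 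The prime c-ideals of $\Lambda_G$ then correspond to discrete valuations on this quotient field, which by the Weierstrass preparation theorem are precisely the $p$-adic valuation and the $f$-adic valuations for irreducible distinguished polynomials $f \in \Zp[T] \subset \Lambda_Z$. These produce respectively $p\Lambda_G$ and $f\Lambda_G$, which are indeed c-ideals because both $p$ and $f$ are central in $\Lambda_G$.

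The main obstacle is the proof that $Z(Q(\Lambda_H)) = \Qp$: a naive graded argument would suggest that the Harish--Chandra central elements of $U(\mathcal{L}(H))$ remain central in $Q(\Lambda_H)$, so one must exhibit explicit elements against which they fail to commute. Making this precise will likely require a careful analysis of commutation relations on a topological PBW basis of $\Lambda_H$, combined with an iterative ``weight-stripping'' argument using the $\mathfrak{sl}_2$-triples attached to the simple roots.
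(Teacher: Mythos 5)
Your proposal inverts the logical structure of the actual argument, and the inversion does not work. In the paper the classification of prime c-ideals of $\Lambda_H$ comes \emph{first}: the key external input is \cite[Theorem 7.3]{AWZ}, which says that $KH$ has no nontrivial reflexive two-sided ideals for $K$ a field of characteristic $p$; this is lifted to $\Lambda_H$ via a completion of the localisation $\Omega_{G,H}$, faithful flatness, the Rees Lemma and Nakayama (Propositions \ref{ModX} and \ref{Appl}), giving that $p\Lambda_H$ is the only prime c-ideal. The centre of $Q(\Lambda_H)$ is then \emph{deduced}: a central $q$ generates a fractional c-ideal $q\Lambda_H$, which by Asano's theorem is $p^n\Lambda_H$, so $q = p^n r$ with $r \in Z(\Lambda_H) = \Zp$. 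Your plan to compute $Z(Q(\Lambda_H))$ directly by an adjoint-action/weight-stripping argument is not a proof: you yourself identify the fatal obstruction (the Harish--Chandra central elements of $U(\mathcal{L}(H))$ survive every graded-level test, and the associated graded ring of $\Lambda_H$ for the natural filtration is in any case commutative, so graded arguments cannot distinguish central from non-central elements) and offer no mechanism to overcome it. This is precisely why the question of Venjakob that this theorem answers was open, and why the paper routes the argument through reflexive ideals instead.

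The second gap is the claimed correspondence between prime c-ideals and discrete valuations that restrict nontrivially to the centre. For a noncommutative Noetherian maximal order there is no theorem guaranteeing that a prime c-ideal $P$ satisfies $P \cap Z(R) \neq 0$; establishing exactly this kind of statement is the hard content, not a formality. Asano's theorem (Theorem \ref{Asano}) only says the fractional c-ideals form a free abelian group on the prime c-ideals; it gives the implication (few prime c-ideals) $\Rightarrow$ (small centre), not the converse you need. Concretely, for $\Lambda_G$ the paper cannot simply quote $Z(Q(\Lambda_G)) = Q(\Lambda_Z)$ and Weierstrass preparation: it must first show (Theorem \ref{FgOverH}, using the localisation $\Lambda_{G,H}$ and again the AWZ input) that a prime c-ideal $I \neq p\Lambda_G$ has $\Lambda_G/I$ finitely generated over $\Lambda_H$, then run a commutator-degree argument in $Q[z]$ (Proposition \ref{ZorH}) --- which is where the centre computation $Z(Q(\Lambda_H)) = \Qp$ is actually used --- to force $I \cap \Lambda_Z \neq 0$, and finally apply Weierstrass preparation and a primality lemma to conclude $I = f\Lambda_G$. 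Without some substitute for these steps, your derivation of the c-ideal classifications from the centre computations is unsupported even if the centre computations were granted.
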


The proof is given in $\S \ref{CentQ}$ and $\S \ref{PrimesG}$ and follows from a very recent result of F. Wei, J. J. Zhang and the author \cite[Theorem 7.3]{AWZ}, which essentially states that $\Omega_H = \Lambda_H/p\Lambda_H$ has no nontrivial prime c-ideals.

\subsection{Acknowledgements} The author would like to thank John Coates for his very valuable comments.

\section{Preliminaries}
\subsection{Fractional ideals}\label{Frac} Let $R$ be a Noetherian domain. It is well-known that $R$ has a skewfield of fractions $Q$. Recall that a right $R$-submodule $I$ of $Q$ is said to be a \emph{fractional right ideal} if $I$ is non-zero and $I \subseteq uR$ for some non-zero $u\in Q$. Fractional left ideals are defined similarly. If $I$ is a fractional right ideal, then
\[ I^{-1} := \{q\in Q : qI \subseteq R\}\]
is a fractional left ideal and there is a similar definition of $I^{-1}$ for fractional left ideals $I$. There is a natural isomorphism between $I^{-1}$ and $I^\ast = \Hom_R(I,R)$. The following elementary result will be very useful for computing $I^{-1}$.
\begin{lem} Let $R$ be a Noetherian domain and let $I$ be a non-zero right ideal of $R$. Then $I^{-1}/R \cong \Ext_R^1(R/I,R)$ as left $R$-modules.
\end{lem}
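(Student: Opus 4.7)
The natural approach is to apply the contravariant functor $\Hom_R(-,R)$ to the short exact sequence of right $R$-modules
\[
0 \to I \to R \to R/I \to 0.
\]
This yields a long exact sequence of left $R$-modules whose initial segment reads
\[
0 \to \Hom_R(R/I, R) \to \Hom_R(R, R) \to \Hom_R(I, R) \to \Ext_R^1(R/I, R) \to \Ext_R^1(R, R).
\]

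The first term vanishes: any right $R$-linear map $\varphi : R/I \to R$ is determined by $r := \varphi(1 + I) \in R$, which must satisfy $rI = 0$, and since $R$ is a domain and $I$ is non-zero this forces $r = 0$. The term $\Hom_R(R, R)$ is canonically isomorphic to $R$ via evaluation at $1$, while $\Ext_R^1(R, R)$ vanishes because $R$ is projective over itself. Thus the sequence collapses to
\[
0 \to R \to \Hom_R(I, R) \to \Ext_R^1(R/I, R) \to 0.
\]

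It remains to invoke the natural left $R$-module isomorphism $\Hom_R(I, R) \cong I^{-1}$ noted in the excerpt, under which $\varphi$ corresponds to the unique $q \in Q$ with $\varphi(i) = qi$ for every $i \in I$. Under this identification, the middle map above (obtained by restricting left multiplication by an element of $R$ from $R$ to $I$) becomes precisely the inclusion $R \hookrightarrow I^{-1}$, since $r \in R$ is sent to the map $i \mapsto ri$, which corresponds to $r$ itself. Passing to cokernels therefore produces the desired isomorphism $I^{-1}/R \cong \Ext_R^1(R/I, R)$ of left $R$-modules.

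There is no real obstruction here; the only point worth verifying is that the identification $\Hom_R(I,R) \cong I^{-1}$ is left $R$-linear and that the submodule $R \subseteq I^{-1}$ corresponds to the image of the restriction map from $\Hom_R(R,R)$, both of which follow immediately from the explicit formula $\varphi(i) = qi$.
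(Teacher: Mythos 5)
Your proof is correct and is precisely the standard argument the paper has in mind (the lemma is stated there without proof as an ``elementary result''): apply $\Hom_R(-,R)$ to $0 \to I \to R \to R/I \to 0$, use $\Hom_R(R/I,R)=0$ and $\Ext_R^1(R,R)=0$, and identify $\Hom_R(I,R)$ with $I^{-1}$ compatibly with the inclusion $R \hookrightarrow I^{-1}$. All the identifications you invoke, including the left $R$-linearity of $\Hom_R(I,R)\cong I^{-1}$ via $\varphi(i)=qi$, are exactly as needed, so there is nothing to add.
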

Recall that the fractional right ideal $I$ is said to be \emph{reflexive} if
\[I = \overline{I} := (I^{-1})^{-1}.\]
Equivalently, the canonical map $I \to I^{\ast\ast}$ is an isomorphism. Let $I$ be a fractional right ideal which is also a fractional left ideal. We say that $I$ is a \emph{fractional c-ideal} if it is reflexive on both sides, and we say that $I$ is a \emph{c-ideal} if $I$ is contained in $R$. Of particular interest are the c-ideals which happen to be prime ideals of $R$.

\subsection{Two results about prime c-ideals} \label{ModX}
Let $R$ be a Noetherian domain and let $x \in R$ is a non-zero central element such that $R/xR$ is a domain. The following facts are well-known, but we include the proofs for the convenience of the reader because we will use these results repeatedly.

\begin{lem} Suppose that $I$ is a proper c-ideal of $R$ containing $x$. Then $I=xR$.
\end{lem}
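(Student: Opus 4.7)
The plan is to compute $I^{-1}$ explicitly and then apply reflexivity. First I would observe that $I^{-1} \subseteq x^{-1}R$: for any $q \in I^{-1}$ we have $qx \in qI \subseteq R$, whence $q \in Rx^{-1} = x^{-1}R$ by centrality of $x$. Combined with the obvious inclusion $R \subseteq I^{-1}$ (coming from $I \subseteq R$), this shows that $I^{-1}/R$ embeds into $x^{-1}R/R$ as a left $R$-submodule.

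Next I would identify $x^{-1}R/R$ with $\bar R := R/xR$ as left $R$-modules via the well-defined isomorphism $x^{-1}r + R \mapsto r + xR$. Under this identification, the submodule $I^{-1}/R$ corresponds to $\{\bar r \in \bar R : rI \subseteq xR\}$, which is precisely the left annihilator of $\bar I := I/xR$ inside the ring $\bar R$.

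Now suppose for a contradiction that $I \neq xR$. Then $\bar I$ is a nonzero right ideal of the domain $\bar R$, so its left annihilator in $\bar R$ is zero. Going back through the identifications, this gives $I^{-1}/R = 0$, i.e.\ $I^{-1} = R$; applying reflexivity of $I$ on the right we conclude $I = (I^{-1})^{-1} = R^{-1} = R$, which contradicts the hypothesis that $I$ is proper. Therefore $I = xR$.

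The argument is essentially a short piece of bookkeeping: there is no deep obstacle, and the only point that requires care is tracking left versus right module structures in the inversion process. It is also worth noting in passing that only right-reflexivity of $I$ is actually used here, rather than the full two-sided c-ideal hypothesis.
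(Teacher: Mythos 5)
Your proof is correct and follows essentially the same route as the paper's: both arguments boil down to showing $xI^{-1}\subseteq R$, using that $R/xR$ is a domain to force either $I\subseteq xR$ or $I^{-1}=R$, and ruling out the latter via reflexivity together with properness of $I$. Your packaging via the identification $x^{-1}R/R\cong R/xR$ and the left annihilator of $I/xR$ is just a more explicit rendering of the paper's appeal to the primeness of $xR$, and your closing remark that only one-sided reflexivity is used is accurate.
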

\begin{proof} Since $I^{-1}I \subseteq R$ and $I \supseteq xR$, we see that $I^{-1}x \subseteq R$. Because $x$ is central, $I^{-1}x = xI^{-1}$ is a two-sided ideal of $R$. Now $(xI^{-1})I \subseteq xR$ and $xR$ is a prime ideal by assumption, so either $xI^{-1} \subseteq xR$ or $I \subseteq xR$. Since $I$ is proper and reflexive, $R$ is properly contained in $I^{-1}$ and therefore $I = xR$.
\end{proof}

\begin{prop} Suppose that $x$ lies in the Jacobson radical of $R$ and that every non-zero two-sided ideal $J$ of $T:=R/xR$ satisfies $J^{-1} = T$. Then $xR$ is the only prime c-ideal of $R$.
\end{prop}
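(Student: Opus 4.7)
Let $P$ be a prime c-ideal of $R$. If $x \in P$, the previous lemma gives $P = xR$, so the task is to rule out $x \notin P$.

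The strategy is to apply Nakayama's lemma to $M := P^{-1}/R$. By the Ext lemma in \S\ref{Frac}, $M \cong \Ext^1_R(R/P,R)$, which is a finitely generated left $R$-module since $R$ is Noetherian. Since $P$ is a proper c-ideal, $P = (P^{-1})^{-1}$ together with $P \neq R$ forces $P^{-1} \supsetneq R$, so $M \neq 0$. If we can show that $xM = M$, then because $x$ lies in the Jacobson radical of $R$, Nakayama gives $M = 0$, a contradiction.

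Everything thus reduces to proving $P^{-1} = R + xP^{-1}$. Before this, I would verify that the image $\bar P := (P + xR)/xR$ is non-zero in $T$: otherwise $P \subseteq xR$, and a short argument using primeness of $P$ combined with the centrality of $x$ yields the identity $P \cap xR = xP$, which together with $P \subseteq xR$ gives $P = xP$; Nakayama applied to the finitely generated left $R$-module $P$ then forces $P = 0$, contradicting that $P$ is a c-ideal. Now, given any $q \in P^{-1}$, I construct a right $T$-module homomorphism $\phi_q : \bar P \to T$ by $\phi_q(\bar p) := qp + xR$; well-definedness rests on $P \cap xR = xP$ together with $qP \subseteq R$ (which gives $q(xP) \subseteq xR$). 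The Ext lemma applied inside the Noetherian domain $T$ identifies $\Hom_T(\bar P, T)$ with $\bar P^{-1}$, and the hypothesis $\bar P^{-1} = T$ then forces $\phi_q$ to be left multiplication by some $\bar r = r + xR \in T$. Unwinding this yields $(q - r)P \subseteq xR$, so $(q - r)/x \in P^{-1}$, and hence $q \in R + xP^{-1}$ as required.

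I expect the main subtlety to be the identity $P \cap xR = xP$, which underpins both the Nakayama argument for $\bar P \neq 0$ and the well-definedness of $\phi_q$. This is exactly where primeness of $P$ enters: if $xr \in P$ and $x \notin P$, then $x$ descends to a non-zero central element of the prime ring $R/P$ annihilating the image of $r$, and the standard fact that a central element of a prime ring is a non-zero-divisor on each side forces $r \in P$. Once this identity is in hand, the rest of the argument is a formal application of the Ext lemma and Nakayama's lemma.
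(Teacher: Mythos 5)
Your argument is correct, and it shares its skeleton with the paper's proof: assume $x \notin P$, show that the finitely generated module $M := P^{-1}/R \cong \Ext_R^1(R/P,R)$ satisfies $M = xM$, and invoke Nakayama's Lemma to contradict $P^{-1} \supsetneq R$. Where you genuinely diverge is in how $M = xM$ is established. The paper applies the long exact sequence of $\Ext_R(-,R)$ to $0 \to R/P \stackrel{x}{\to} R/P \to T/\overline{P} \to 0$ and then the Rees Lemma to identify the obstruction $\Ext_R^2(T/\overline{P},R)$ with $\Ext_T^1(T/\overline{P},T) \cong \overline{P}^{-1}/T = 0$. You unwind this homological step into an explicit computation: your $\phi_q \in \Hom_T(\overline{P},T)$ is in effect the image of $q+R$ under the connecting homomorphism, and your deduction from $(q-r)P \subseteq xR$ that $q \in R + xP^{-1}$ is the exactness statement at that spot. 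The cost of the elementary route is that you must prove the identity $P \cap xR = xP$ (your argument via the centrality of $\bar{x}$ in the prime ring $R/P$ is the right one) and check well-definedness of $\phi_q$; the gain is that you avoid the Rees Lemma and the $\Ext$ machinery entirely, and you make explicit a point the paper passes over in silence, namely that the image $\overline{P}$ of $P$ in $T$ is non-zero, i.e.\ $P \not\subseteq xR$. (In the homological version this case is harmless since $\Ext_T^1(T,T)=0$, but your Nakayama argument excluding it is clean and worth having on record.) Both proofs are complete; yours is the more self-contained of the two.
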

\begin{proof} Let $I$ be a prime c-ideal of $R$. By the above lemma, it will be enough to show that $x \in I$. Let $M = R/I$ and let $J$ be the image of $I$ in $T$, so that $M/xM \cong T/J$ as right $T$-modules. The short exact sequence of right $R$-modules
\[0 \longrightarrow M \stackrel{x}{\longrightarrow} M \longrightarrow T/J \longrightarrow 0\]
gives rise to the following long exact sequence of left $R$-modules:
\[\cdots \longrightarrow \Ext_R^1(M,R) \stackrel{x}{\longrightarrow} \Ext_R^1(M,R) \longrightarrow \Ext_R^2(T/J, R) \longrightarrow \cdots.\]
Suppose for a contradiction that $x \notin I$. Because $I$ is prime and $x$ is central, the module $M$ is $x$-torsion-free. By the well-known Rees Lemma (see, for example, \cite[Lemma 1.1]{ASZ}), we have
\[\Ext_R^2(T/J, R) \cong \Ext_T^1(T/J,T).\]
Because $J$ is a non-zero two-sided ideal of $T$, we must have $J^{-1} = T$ by assumption. Since $T$ is a domain, $\Ext_T^1(T/J,T) \cong J^{-1}/T = 0$ by Lemma \ref{Frac}, so the finitely generated left $R$-module $E := \Ext_R^1(M,R)$ satisfies $E = xE$. Because $x$ lies in the Jacobson radical of $R$, Nakayama's Lemma implies that $E = 0$. Hence $I^{-1} = R$, again by Lemma \ref{Frac}. But $I$ is reflexive by assumption, so $I = (I^{-1})^{-1} = R$, contradicting the assumption that $I$ is a prime ideal.
\end{proof}

\subsection{Maximal orders}
\label{Asano} Recall that $R$ is said to be a \emph{maximal order} in $Q$ if whenever $S$ is a subring of $Q$ containing $R$ such that $uSv\subseteq R$ for some non-zero $u,v \in Q$, we must have $S = R$. Asano \cite{Asano} showed that the set $G(R)$ of fractional c-ideals for a maximal order $R$ is an abelian group when equipped with the product
\[I \cdot J := \overline{IJ}\]
and inverse $I \mapsto I^{-1}$ defined in $\S$\ref{Frac}. He also proved the following
\begin{thm} $G(R)$ is a free abelian group on the set of prime c-ideals of $R$.
\end{thm}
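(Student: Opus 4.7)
The plan is to adapt the classical proof of unique factorization in a Dedekind domain to the non-commutative setting, using Asano's already-established fact that $G(R)$ is an abelian group under the product $I \cdot J := \overline{IJ}$. Three things need verification: (i) every c-ideal of $R$ is a finite product of prime c-ideals in $G(R)$; (ii) distinct prime c-ideals are pairwise incomparable; (iii) the resulting factorization is essentially unique.

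For (i), I would proceed by Noetherian induction on proper c-ideals contained in $R$. Given such an $I$, the ascending chain condition yields a maximal element $P$ in the poset of proper c-ideals of $R$ containing $I$. To show $P$ is prime, suppose for contradiction that $A, B$ are two-sided ideals of $R$ with $AB \subseteq P$ but $A, B \not\subseteq P$. Then $\overline{A + P}$ and $\overline{B + P}$ are c-ideals strictly containing $P$, so by maximality both equal $R$. Computing in $G(R)$,
\[ R = R \cdot R = \overline{A+P} \cdot \overline{B+P} = \overline{(A+P)(B+P)} \subseteq \overline{P} = P, \]
where the last inclusion uses $(A+P)(B+P) \subseteq AB + AP + PB + P^2 \subseteq P$ and the middle equality is the associativity identity $\overline{\overline{I}\,\overline{J}} = \overline{IJ}$. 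This contradiction shows $P$ is prime. Monotonicity of the Asano product then makes $I \cdot P^{-1}$ a c-ideal of $R$ containing $I$; strict containment holds because in the group $G(R)$ the equation $I \cdot P^{-1} = I$ would force $P^{-1} = R$ and hence $P = R$. Iterating gives the required factorization.

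For (ii), suppose $P \subsetneq P'$ are prime c-ideals. Monotonicity gives $P \cdot (P')^{-1} \subseteq P' \cdot (P')^{-1} = R$, and the identity $(P \cdot (P')^{-1}) \cdot P' = P$ shows that the set-theoretic product $(P \cdot (P')^{-1})\, P'$ lies in $P$. Primeness of $P$, together with $P' \not\subseteq P$, therefore forces $P \cdot (P')^{-1} \subseteq P$. This embeds $(P')^{-1}$ in the right order $S := \{q \in Q : Pq \subseteq P\}$, a subring of $Q$ containing $R$. Choosing any non-zero $u \in P$ gives $uS \subseteq P \subseteq R$, so the maximal order property forces $S = R$; hence $(P')^{-1} = R$ and therefore $P' = R$, a contradiction. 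Finally, (iii) is standard: any relation in $G(R)$ can be rearranged as $\prod_i P_i^{a_i^+} = \prod_i P_i^{a_i^-}$ with disjoint supports, and iterated primeness together with (ii) forces both products to be empty.

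The main obstacle is the careful bookkeeping around the bidual operation, in particular the identity $\overline{\overline{I}\, \overline{J}} = \overline{IJ}$ and the agreement between the left and right versions of the inverse of a c-ideal. Both are consequences of Asano's theorem that $G(R)$ is a group, and once they are in hand the argument proceeds in close parallel with the classical Dedekind case.
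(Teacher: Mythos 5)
The paper does not actually prove this theorem---it simply cites Maury--Raynaud [II.1.8 and II.2.6]---so your argument is necessarily a different route: a self-contained adaptation of the classical Dedekind-domain proof, which is in essence what the cited reference does. Your steps (i)--(iii) are sound as written: the maximality argument producing a prime c-ideal $P$ above a given one, the strict growth $I \subsetneq I\cdot P^{-1}$ via cancellation in $G(R)$, the incomparability argument through the right order $S=\{q\in Q: Pq\subseteq P\}$ together with the maximal-order property, and the disjoint-support argument for freeness. Two points need attention before this is complete. First, your (i) only factorises c-ideals \emph{contained in} $R$, whereas $G(R)$ consists of all fractional c-ideals; to see that the primes generate the whole group you still need the (routine) reduction writing an arbitrary fractional c-ideal $I$ as $(I\cdot J)\cdot J^{-1}$, where $J:=\bigl(\overline{I+R}\bigr)^{-1}$ and both $J$ and $I\cdot J$ are c-ideals inside $R$. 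Second, the identity $\overline{\overline{A}\,\overline{B}}=\overline{AB}$ for \emph{arbitrary}, not necessarily reflexive, two-sided ideals---which you invoke for $A+P$ and $B+P$ in the primeness argument---is not a formal consequence of the statement that $G(R)$ is a group, since that statement only speaks about reflexive ideals. It follows instead from the two standard maximal-order facts that left and right inverses of a two-sided fractional ideal coincide and that $A^{-1}=\overline{A}^{-1}$; these are exactly where the maximal-order hypothesis does its work, so they should be quoted or proved rather than attributed to the group structure. With those two additions your proof is correct, and it has the virtue of making visible the interaction between primeness and reflexivity that the paper's bare citation hides.
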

\begin{proof} See, for example, \cite[II.1.8 and II.2.6]{MR}.
\end{proof}

\section{A useful isomorphism}

\subsection{Venjakob's localisation of Iwasawa algebras}

If $G$ is a compact $p$-adic analytic group, we write $\Lambda_G$ and $\Omega_G$ for the completed group algebras of $G$ with coefficients in $\Zp$ and $\Fp$, respectively. See \cite{AB2} for more information about these algebras.

Let $H$ be a closed normal subgroup of $G$. Recall from \cite[\S 1.5]{AB1} the two-sided augmentation ideal
\[w_{H,G} := (H-1)\Omega_G = \ker(\Omega_G \to \Omega_{G/H}).\]
Let $N$ be an open pro-$p$ subgroup of $H$ which is normal in $G$, and let $P_H = \sqrt{w_{N,G}}$ be the prime radical of $w_{N,G}$. This is a semiprime ideal of $\Omega_G$ which only depends on $H$ by \cite[Lemma 3.2]{AB1}. It was shown in \cite[Theorem D]{AB1} that $P_H$ is a \emph{localisable ideal} in $\Omega_G$, meaning that
\[\{s \in \Omega_G : s\mbox{ is regular modulo $P_H$}\}\]
is a two-sided Ore set in $\Omega_G$. We will denote the Ore localisation of $\Omega_G$ at this Ore set by $\Omega_{G,H}$.

\subsection{Lemma}\label{IsoHfinite} Suppose that $G = Z\times H$ where $H$ is finite and $Z \cong \Zp^n$ for some $n \geq 0$. Then $\Omega_{G,H} \cong K[H]$, where $K$ is the field of fractions of $\Omega_Z$.
\begin{proof} Suppose $L$ is an open subgroup of $H$ which is normal in $G$. Then the definition of $P_H$ shows that $P_H = P_L$, so $\Omega_{G,H} = \Omega_{G,L}$. Because $H$ is finite, we may choose $L$ to be the trivial group. Now $Z$ is an open normal uniform subgroup of $G$, and in this situation \cite[Lemma 5.1]{AB1} shows that $\Omega_{G,H}$ is a crossed product of $\Omega_{Z,1}$ with the finite group $G/Z$:
\[\Omega_{G,H} = \Omega_{G,1} = \Omega_{Z,1} \ast (G/Z).\]
Clearly $\Omega_{Z,1}$ is the field of fractions $K$ of $\Omega_Z$, and $\Omega_G = \Omega_Z[H]$ because $G = Z\times H$. Now inspecting the proof of \cite[Lemma 5.1]{AB1} shows that the crossed product on the right hand side is in fact just the ordinary group ring $K[H]$.
\end{proof}

\subsection{A completion of $\Omega_{G,H}$}\label{Iso} Now let $H$ be an arbitrary compact $p$-adic analytic group and let $Z \cong \Zp^n$. Letting $K$ denote the field of fractions of $\Omega_Z$, we can form the completed group algebra of $H$ with coefficients in $K$:
\[KH := K[[H]] : =\lim_{\longleftarrow} K[H/U],\]
where $U$ runs over all the open normal subgroups of $U$. We can now state the main result of this section.

\begin{thm} Let $G = Z\times H$, let $N$ be an open normal uniform subgroup of $H$ and let $\mathfrak{m}$ be the augmentation ideal $w_{N,G}\Omega_{G,H}$ of $\Omega_{G,H}$. Then
\begin{enumerate}[{(}a{)}]
\item the $\mathfrak{m}$-adic filtration on $\Omega_{G,H}$ is Zariskian, and
\item the completion of $\Omega_{G,H}$ with respect to this filtration is isomorphic to $KH$.
\end{enumerate}
\end{thm}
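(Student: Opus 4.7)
The plan is to identify the $\mathfrak{m}$-adic completion of $\Omega_{G,H}$ with $KH$ by reducing to the finite-$H$ case handled by Lemma \ref{IsoHfinite}, and then to derive the Zariskian property of (a) from the resulting description of the associated graded ring.

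First I would analyse what happens upon passing to a quotient $G \to G/N'$, where $N'$ runs through open normal subgroups of $G$ contained in $N$. For every such $N'$ the group $H/N'$ is finite, so Lemma \ref{IsoHfinite} (applied with auxiliary uniform subgroup trivial) identifies $\Omega_{G/N',H/N'}$ with $K[H/N']$. The task here is to check that the projection $\pi_{N'}: \Omega_G \to \Omega_{G/N'}$ sends elements regular modulo $P_H$ to elements regular modulo $P_{H/N'}$, the latter being the nilradical of $\Omega_{G/N'}$. Once this compatibility is established, $\pi_{N'}$ extends to a surjective ring homomorphism $\Omega_{G,H} \to K[H/N']$; one must also verify that its kernel is precisely $w_{N',G}\Omega_{G,H}$.

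Next I would compare the two descending families of ideals on $\Omega_{G,H}$ given by $\{\mathfrak{m}^k\}_{k \ge 1}$ and $\{w_{N',G}\Omega_{G,H}\}_{N'}$. In one direction, using the dimension subgroup filtration of the uniform group $N$, one produces for each $k$ some $N'$ satisfying $w_{N',N} \subseteq w_N^k$, hence $w_{N',G}\Omega_{G,H} \subseteq \mathfrak{m}^k$. In the other direction, $N/N'$ is a finite $p$-group, so the augmentation ideal of $\Omega_{N/N'}$ is nilpotent, forcing $w_N^k \subseteq w_{N',N}$ and therefore $\mathfrak{m}^k \subseteq w_{N',G}\Omega_{G,H}$ for $k$ sufficiently large. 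Taking inverse limits over these cofinal systems then yields
\[
\widehat{\Omega_{G,H}} \;\cong\; \invlim \, \Omega_{G,H}/\mathfrak{m}^k \;\cong\; \invlim \, K[H/N'] \;=\; KH,
\]
which proves (b).

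For (a) I would use the standard characterisation that the $\mathfrak{m}$-adic filtration is Zariskian provided $\mathfrak{m}$ lies in the Jacobson radical of $\Omega_{G,H}$ and the associated graded ring $\gr_\mathfrak{m} \Omega_{G,H}$ is Noetherian. The first condition is automatic, since $\mathfrak{m}$ is contained in $P_H \Omega_{G,H}$ and Ore localising at the elements regular modulo the semiprime ideal $P_H$ places $P_H$ into the Jacobson radical of the localisation. For the second, the isomorphism of (b) transports $\gr_\mathfrak{m} \Omega_{G,H}$ to the associated graded ring of $KH$ with respect to the $w_{N,G}KH$-adic filtration, which is a finitely generated algebra over the Noetherian ring $K[H/N]$ and hence Noetherian. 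The main obstacle I anticipate is the first step: carefully verifying that the projection $\pi_{N'}$ behaves well with respect to the Ore sets defining $\Omega_{G,H}$ and $\Omega_{G/N',H/N'}$, and that the resulting map has exactly the kernel $w_{N',G}\Omega_{G,H}$. Once this compatibility is in place, the inverse-limit computation and the analysis of the graded ring are essentially formal.
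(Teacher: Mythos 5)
Your treatment of part (b) is essentially the argument the paper gives: identify $\Omega_{G,H}/w_{U,G}\Omega_{G,H}$ with $K[H/U]$ via Lemma \ref{IsoHfinite}, check that the families $\{\mathfrak{m}^k\}$ and $\{w_{U,G}\Omega_{G,H}\}$ are cofinal (the paper quotes \cite[Lemma 7.1]{DDMS} for the cofinality of $\{w_{N,N}^k\}$ and $\{w_{U,N}\}$ inside $\Omega_N$), and pass to the inverse limit. One slip of language: the subgroups you call $N'$ must range over open normal subgroups of $H$ contained in $N$, not of $G$; since $N$ has infinite index in $G$, there are no open subgroups of $G$ inside $N$ at all.

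Part (a), however, contains a genuine gap. A Zariskian filtration is, by definition, one for which $F_{-1}R$ lies in the Jacobson radical of $F_0R$ \emph{and the Rees ring $\widetilde{R}$ is Noetherian}. The criterion you invoke --- ``$\mathfrak{m}\subseteq J(\Omega_{G,H})$ and $\gr_{\mathfrak{m}}\Omega_{G,H}$ Noetherian'' --- is not equivalent to this: Noetherianity of the associated graded ring implies Zariskianness only together with an additional hypothesis such as the comparison/Artin--Rees condition or completeness of the filtration (see \cite[Ch.~II]{LVO}), and Artin--Rees is exactly the sort of property that can fail for ideals of noncommutative Noetherian rings. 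The paper sidesteps this by proving Noetherianity of the Rees ring directly: from $\Omega_{G,N}\cong \Omega_{ZN,N}\ast(H/N)$ it deduces $\widetilde{\Omega_{G,N}}\cong \widetilde{\Omega_{ZN,N}}\ast(H/N)$, and $\widetilde{\Omega_{ZN,N}}$ is Noetherian by \cite[Proposition 5.3]{AB1}, whence so is the crossed product with the finite group $H/N$. Two subsidiary steps in your sketch would also need repair even if the criterion were granted: transporting $\gr_{\mathfrak{m}}\Omega_{G,H}$ to the graded ring of $KH$ for the $w_{N,G}KH$-adic filtration presupposes $\widehat{\mathfrak{m}^k}=\widehat{\mathfrak{m}}^{\,k}$, which is normally deduced \emph{from} the Zariskian property rather than available beforehand; and ``a finitely generated algebra over a Noetherian ring is Noetherian'' is false noncommutatively (free algebras), so the Noetherianity of the graded ring must instead come from its concrete description as a crossed product of a commutative polynomial ring with the finite group $H/N$.
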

\begin{proof}(a) Because $N$ is normal in $G$ and has finite index in $H$, $\Omega_{G,H} = \Omega_{G,N}$ as above. Now $ZN$ is an open normal uniform subgroup of $G$ and $G/ZN \cong H/N$, so \cite[Lemma 5.1]{AB1} shows that
\[\Omega_{G,H} = \Omega_{G,N} \cong \Omega_{ZN,N} \ast (H/N).\]
Let $\mathfrak{n} = w_{N,ZN}\Omega_{ZN,N}$, so that $\mathfrak{m} =
\mathfrak{n}\cdot \Omega_{G,N}$ and $\mathfrak{m}^k = \mathfrak{n}^k
\cdot \Omega_{G,N}$ for all $k$. We can therefore compute the Rees
ring of $\Omega_{G,N}$ with respect to the $\mathfrak{m}$-adic
filtration as follows:
\[\widetilde{\Omega_{G,N}} = \bigoplus_k \mathfrak{m}^{-k}t^k = \left(\bigoplus_k \mathfrak{n}^{-k}t^k\right) \otimes_{\Omega_{ZN,N}}\Omega_{G,N} \cong \widetilde{\Omega_{ZN,N}} \ast (H/N).\]
It was shown in \cite[Proposition 5.3]{AB1} that the Rees ring $\widetilde{\Omega_{ZN,N}}$ is Noetherian, so $\widetilde{\Omega_{G,N}}$ is also Noetherian since $H/N$ is finite.

Now $P_H\Omega_{G,H}$ is the Jacobson radical of $\Omega_{G,H}$ by
\cite[Theorem 3.2.3(a)]{Jat}, and $\mathfrak{m} =
w_{N,G}\Omega_{G,H} \subseteq P_H\Omega_{G,H}$. Hence the
$\mathfrak{m}$-adic filtration is Zariskian.

(b) Let $R$ denote the completion of $\Omega_{G,N}$ at the $\mathfrak{m}$-adic filtration. By \cite[Lemma 7.1]{DDMS}, the families of ideals $\{w_{N,N}^k : k\geq 0\}$ and $\{w_{U,N} : U \triangleleft_o N\}$ are cofinal in $\Omega_N$. It follows that the families of ideals
\[\{\mathfrak{m}^k : k\geq 0\} \quad\mbox{and}\quad \{w_{U,G}\Omega_{G,N} : U \triangleleft_o H, U \leq N\}\]
are cofinal in $\Omega_{G,N}$, so $R$ is isomorphic to the completion of $\Omega_{G,N}$ at the second family.

Let $U$ be an open normal subgroup of $H$ which is contained in $N$. Then $G/U \cong Z\times (H/U)$ and $H/U$ is finite. Now $\Omega_G / w_{U,G} \cong \Omega_{G/U}$, so by Lemma \ref{IsoHfinite} we have
\[\Omega_{G,N} / w_{U,G}\Omega_{G,N} \cong \Omega_{G/U, N/U} = \Omega_{G/U, H/U} \cong K[H/U].\]
Hence $R \cong \invlim K[H/U] \cong KH$.
\end{proof}

\subsection{An application}\label{Appl} Let $I_H$ denote the inverse image of $P_H$ in $\Lambda_G$. By \cite[Theorem G]{AB1}, $I_H$ is a localisable ideal in $\Lambda_G$ and we denote the corresponding localisation by $\Lambda_{G,H}$. The element $p$ lies in the Jacobson radical of $\Lambda_{G,H}$ and
\[\Lambda_{G,H} / p\Lambda_{G,H} \cong \Omega_{G,H}.\]

\begin{prop} Let $G = Z \times H$ where $Z \cong \Zp^n$ for some $n$ and $H$ is a torsionfree compact $p$-adic analytic group. Let $K$ be the field of fractions of $\Omega_Z$ and suppose that $J^{-1} = KH$ for every non-zero two-sided ideal $J$ of $KH$. Then the only prime c-ideal of $\Lambda_{G,H}$ is $p\Lambda_{G,H}$.
\end{prop}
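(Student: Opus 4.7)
The plan is to verify the hypotheses of Proposition \ref{ModX} for the ring $R = \Lambda_{G,H}$ and the central element $x = p$. By $\S$\ref{Appl}, $p$ lies in the Jacobson radical of $\Lambda_{G,H}$ and $\Lambda_{G,H}/p\Lambda_{G,H} \cong \Omega_{G,H}$. Since $G = Z\times H$ is torsionfree, $\Omega_G$ is a domain by Neumann's theorem, hence so is its Ore localisation $T := \Omega_{G,H}$. Proposition \ref{ModX} then reduces the task to showing that every non-zero two-sided ideal $\mathfrak{J}$ of $T$ satisfies $\mathfrak{J}^{-1} = T$.

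To achieve this, I would transfer the assumption on $KH$ across the completion map supplied by Theorem \ref{Iso}. Part (a) of that theorem says the $\mathfrak{m}$-adic filtration on $T$ is Zariskian, so the canonical inclusion $T \hookrightarrow \hat{T}$ into its completion is faithfully flat on both sides, and part (b) identifies $\hat{T}$ with $KH$. Given any non-zero two-sided ideal $\mathfrak{J}$ of $T$, its extension $\hat{\mathfrak{J}} := \mathfrak{J}\hat{T}$ is therefore a non-zero two-sided ideal of $KH$, and the hypothesis yields $\hat{\mathfrak{J}}^{-1} = KH$, which by Lemma \ref{Frac} is equivalent to $\Ext^1_{KH}(KH/\hat{\mathfrak{J}}, KH) = 0$.

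The final step is to descend this vanishing from $\hat{T}$ back to $T$. Because $T$ is Noetherian and $T/\mathfrak{J}$ is finitely presented, flatness of $\hat{T}$ over $T$ combined with $(T/\mathfrak{J})\otimes_T \hat{T} \cong \hat{T}/\hat{\mathfrak{J}}$ gives
\[\Ext^1_T(T/\mathfrak{J}, T) \otimes_T \hat{T} \;\cong\; \Ext^1_{\hat{T}}(\hat{T}/\hat{\mathfrak{J}}, \hat{T}) \;=\; 0.\]
Faithful flatness then forces $\Ext^1_T(T/\mathfrak{J}, T) = 0$, and Lemma \ref{Frac} delivers $\mathfrak{J}^{-1} = T$. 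Feeding this into Proposition \ref{ModX} concludes the argument.

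The substantive point is the passage between $T$ and its completion, and within that the main technical check is that $\Ext^1$ commutes with the faithfully flat base change $T \to \hat{T}$; this is routine once one resolves $T/\mathfrak{J}$ by finitely generated projective $T$-modules and uses $\Hom_T(P,T)\otimes_T \hat{T} \cong \Hom_{\hat{T}}(P\otimes_T \hat{T}, \hat{T})$. A smaller but worth-noting observation is that $\hat{\mathfrak{J}}$ really is a two-sided ideal of $\hat{T}$, which follows from the Zariskian property: the closure of a two-sided ideal in a Zariskian ring is again a two-sided ideal.
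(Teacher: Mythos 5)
Your proposal is correct and follows essentially the same route as the paper: reduce via Proposition \ref{ModX} with $x=p$ to showing $\mathfrak{J}^{-1}=T$ for non-zero two-sided ideals of $T=\Omega_{G,H}$, then use the Zariskian $\mathfrak{m}$-adic filtration from Theorem \ref{Iso} to pass faithfully flatly to the completion $KH$, commute $\Ext^1$ with the base change, and descend the vanishing. The only cosmetic difference is that you spell out why $\Omega_{G,H}$ is a domain and why $\Ext^1$ commutes with flat base change, where the paper cites \cite[Theorem C]{AB1} and \cite[Proposition 1.2]{AWZ} respectively.
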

\begin{proof} Because $H$ is torsionfree, $KH$ is a Noetherian domain by the proof of \cite[Theorem C]{AB1}. In view of Proposition \ref{ModX}, it will be enough to show that every non-zero two-sided ideal $J$ of $T:=\Omega_{G,H}$ satisfies $J^{-1} = T$.

Fix an open normal uniform subgroup $N$ of $H$ as in Theorem \ref{Iso}. By that result, the $\mathfrak{m}$-adic filtration on $T$ is Zariskian and the completion of $T$ with respect to this filtration is isomorphic to $KH$. Hence $KH$ is a faithfully flat (right and left) $T$-module by \cite[Chapter II, \S 2.2.1, Theorem 2]{LVO}, and therefore
\[KH \otimes_T \Ext_T^1(T/J,T) \cong \Ext_{KH}^1((T/J)\otimes_{T} KH, KH)\]
by \cite[Proposition 1.2]{AWZ}. Now $(T/J)\otimes_{T} KH \cong KH /
J\cdot KH$ and $J \cdot KH$ is just the completion $\widehat{J}$ of
$J$ with respect to the $\mathfrak{m}$-adic topology on $T$ by
\cite[Chapter II, \S 1.1.2, Theorem 10(5)]{LVO}. As such, $J\cdot
KH$ is a \emph{two-sided} ideal of $KH$ and therefore $(J\cdot
KH)^{-1} = KH$ by our assumption.

It follows from Lemma \ref{Frac} that $KH \otimes_T \Ext_T^1(T/J,T) = 0$, but $KH$ is a faithfully flat left $T$-module, so $\Ext_T^1(T/J,T) = 0$. Therefore $J^{-1} = T$.
\end{proof}
\section{The centre of the skewfield of fractions of $\Lambda_H$}

\subsection{Iwasawa algebras are maximal orders}
\label{IwaMaxOrd}
\begin{lem} Let $G$ be a torsionfree compact $p$-adic analytic group. Then $\Lambda_G$ is a maximal order.
\end{lem}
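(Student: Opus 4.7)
The plan is to pass to the associated graded ring via a Zariskian filtration and reduce the problem to showing that a polynomial ring, twisted by a finite group action, is a maximal order.

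Since $G$ is torsionfree and compact $p$-adic analytic, it contains an open normal uniform pro-$p$ subgroup $N$, of dimension $d$ say, and the inclusion $N \hookrightarrow G$ realises $\Lambda_G$ as a crossed product $\Lambda_N \ast (G/N)$ with a finite group. I would equip $\Lambda_G$ with the $J$-adic filtration where $J = w_{N,G} + p\Lambda_G$. Both $p$ and $w_{N,G}$ lie in the Jacobson radical of $\Lambda_G$ (the former because $\Lambda_G$ is $p$-adically complete, the latter because $N$ is pro-$p$), and the Rees ring is Noetherian by arguments similar to those used in the proof of Theorem \ref{Iso}(a). Hence the filtration is Zariskian.

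The associated graded ring is $\gr \Lambda_G \cong \Fp[X_0, X_1, \ldots, X_d] \ast (G/N)$, with $X_0$ the symbol of $p$, the $X_i$ coming from a minimal topological generating set of $N$, and $G/N$ acting on the polynomial ring through its conjugation action on $N$. The polynomial ring $\Fp[X_0, \ldots, X_d]$ is a UFD and hence a commutative maximal order; a standard result on crossed products with finite groups in \cite{MR} then implies that $\gr \Lambda_G$ is itself a maximal order, using primeness (which holds because $\Lambda_G$ is a Noetherian domain forces its associated graded to be prime). The lifting theorem from \cite[Chapter III]{LVO}---a Zariskian filtered ring whose associated graded is a Noetherian maximal order is itself a maximal order---would then finish the argument.

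\textbf{Main obstacle.} The crossed product step for the associated graded. When $|G/N|$ is a power of $p$ the usual Maschke-style averaging trick is unavailable, so the maximal order property has to be propagated via reflexivity of $(G/N)$-invariant fractional ideals, using directly that the orders $O_\ell(I), O_r(I)$ of any such ideal $I$ are controlled by those of $I \cap \Fp[X_0, \ldots, X_d]$.
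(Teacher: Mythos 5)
Your overall strategy (Zariskian filtration, show the associated graded ring is a maximal order, lift via \cite[Chapter III]{LVO} or \cite[X.2.1]{MR}) is the same as the paper's, but your choice of filtration routes you straight into a step that does not go through, and you have in effect flagged this yourself. There is no ``standard result on crossed products with finite groups in \cite{MR}'' asserting that $R \ast F$ is a maximal order whenever $R$ is one: this fails in general, and the case you need --- a finite $p$-group acting on a polynomial ring over $\Fp$, where Maschke-style averaging by $|F|^{-1}$ is unavailable --- is exactly the hard case. Your ``main obstacle'' paragraph gestures at controlling $O_\ell(I)$ and $O_r(I)$ via $I \cap \Fp[X_0,\dots,X_d]$, but this is not an argument; it is precisely the content of the nontrivial theorem you would need to prove. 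The paper avoids this entirely by filtering $\Lambda_G$ $p$-adically: the associated graded ring is then $\Omega_G[t]$, the fact that $\Omega_G$ is a maximal order is quoted from \cite[Corollary 4.7]{AB2} (that reference is where all the hard crossed-product work actually lives), and $\Omega_G[t]$ is a maximal order by the standard polynomial-extension result \cite[V.2.5]{MR}. So the paper reduces to two genuinely standard citations plus the lifting theorem, whereas your version would require reproving the substance of \cite[Corollary 4.7]{AB2} from scratch.

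A secondary error: you justify primeness of $\gr \Lambda_G$ by saying that $\Lambda_G$ being a Noetherian domain ``forces its associated graded to be prime.'' The implication runs the other way --- if $\gr \Lambda_G$ is a domain (or prime) one deduces the corresponding property of $\Lambda_G$, not conversely --- so primeness of your graded ring $\Fp[X_0,\dots,X_d] \ast (G/N)$ would need a direct proof, and it is not obvious (compare $\Fp[C_p] \cong \Fp[X]/(X^p)$, which is not even semiprime). In the paper's setup this issue is also absorbed by the citations: $\Omega_G$ is a domain for torsionfree $G$ by \cite[Theorem 4.3]{AB2}, hence so is $\Omega_G[t]$, which is what \cite[X.2.1]{MR} requires.
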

\begin{proof} It is well known that $\Lambda_G$ is a domain. Moreover, $\Omega_G$ is also a domain \cite[Theorem 4.3]{AB2} and is a maximal order in its skewfield of fractions by \cite[Corollary 4.7]{AB2}. If we filter $\Lambda_G$ $p$-adically, we get a Zariskian filtration whose associated graded ring is isomorphic to the polynomial ring $\Omega_G[t]$. This is a maximal order by \cite[V.2.5]{MR}. Because $\Omega_G[t]$ is a domain we may apply \cite[X.2.1]{MR} and deduce that $\Lambda_G$ is also a maximal order.
\end{proof}

If one is happy to pass to open subgroups of $G$ then one can assume that the group $G$ is $p$-valued in the sense of Lazard. In this case, $\Lambda_G$ is already known to be a maximal order \cite[Proposition 7.2]{CSS}.

\subsection{Two-sided ideals in $KH$}\label{Omega}
From now on, we will assume that our group $G$ satisfies the conditions of Theorem \ref{Main}:
\begin{itemize}
\item $G = H \times Z$ for some closed subgroups $H$ and $Z$,
\item $Z \cong \Zp$,
\item $H$ is torsionfree pro-$p$,
\item $\mathcal{L}(H)$ is split semisimple over $\Qp$,
\item $p \geq 5$.
\end{itemize}
We should remark that the hypotheses made on $H$ are only there to ensure that the conclusion of following theorem holds. In particular, the hypothesis that $p\geq 5$ can probably be dispensed with altogether.

\begin{thm} For any field $K$ of characteristic $p$, if $I$ is a non-zero two-sided ideal of the completed group algebra $KH$, then $I^{-1} = KH$.
\end{thm}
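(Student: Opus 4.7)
The plan is to reduce the statement to the case $K = \Fp$, which is essentially Theorem 7.3 of \cite{AWZ} (the assertion that $\Omega_H$ has no nontrivial prime c-ideal). The reduction breaks into showing that $KH$ is itself a maximal order, identifying the statement ``$I^{-1} = KH$'' with the absence of nonzero prime c-ideals in $KH$, and finally passing from $\Fp$-coefficients to arbitrary $K$.

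First I would verify that $KH$ is a maximal order in its skewfield of fractions, imitating Lemma \ref{IwaMaxOrd}. Fix an open normal uniform subgroup $N$ of $H$, filter $KH$ by powers of the augmentation ideal of $N$, and observe that the associated graded ring is isomorphic to the polynomial ring $K[X_1, \ldots, X_d]$ in $d = \dim H$ variables. This is a commutative Noetherian UFD, hence a maximal order, and the filtration is Zariskian, so \cite[X.2.1]{MR} lifts the maximal order property to $KH$. Given this, Asano's theorem (Theorem \ref{Asano}) identifies the group $G(KH)$ of fractional c-ideals as the free abelian group on the prime c-ideals. For a nonzero two-sided ideal $I$ of $KH$, a standard argument shows that $I^{-1}$ is reflexive on both sides and therefore lies in $G(KH)$. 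Consequently, if $KH$ admits no nonzero prime c-ideal then $G(KH)$ is trivial and $I^{-1} = KH$ for every such $I$.

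The remaining and main step is the passage from $K = \Fp$ (where the result is Theorem 7.3 of \cite{AWZ}) to an arbitrary characteristic-$p$ field $K$. The AWZ argument that $\Omega_H$ has no nonzero prime c-ideal rests on the augmentation filtration of $\Omega_H$, the commutative algebra of the associated polynomial graded ring, and Lie-theoretic input from $\mathcal{L}(H)$; replacing $\Fp$ by $K$ changes the graded ring to $K[X_1, \ldots, X_d]$, which is still a UFD, so structurally the argument should persist. I would therefore aim to show that the AWZ argument applies verbatim with $K$ in place of $\Fp$. An alternative route would be to establish faithful flatness of the natural map $\Omega_H \to KH$ and transfer the vanishing of $\Ext^1_R(R/I,R)$ using Lemma \ref{Frac}, but this seems subtler because a two-sided ideal of $KH$ need not be generated by one coming from $\Omega_H$. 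The main obstacle in either route is the careful audit needed to confirm that no step of the AWZ proof tacitly exploits a property peculiar to $\Fp$, such as its finiteness or particular arithmetic features.
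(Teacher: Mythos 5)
There is a gap, though of an unusual kind: the step you yourself single out as ``the remaining and main step'' --- extending the result of \cite[Theorem 7.3]{AWZ} from $\Fp$-coefficients to an arbitrary field $K$ of characteristic $p$ --- is not carried out, only described as an audit you ``would aim'' to perform. As written, the proposal therefore does not prove the theorem. What makes the situation ironic is that this step is vacuous: you appear to have taken your statement of the cited result from the paper's introduction, which paraphrases it loosely as ``$\Omega_H$ has no nontrivial prime c-ideals.'' The theorem actually invoked in the body of the paper is already stated for the completed group algebra $KH$ over an \emph{arbitrary} field $K$ of characteristic $p$, and it asserts something stronger than the absence of prime c-ideals, namely that $KH$ has no non-trivial reflexive two-sided ideals whatsoever. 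Granting that, the paper's entire proof is two lines: since $I \neq 0$ and $\overline{I} = (I^{-1})^{-1}$ is a reflexive two-sided ideal containing $I$, we must have $\overline{I} = KH$, whence $I^{-1} = \overline{I}^{-1} = KH$ (using the standard identity $I^{-1} = ((I^{-1})^{-1})^{-1}$). So the honest conclusion is that your proof is incomplete as a self-contained argument, but complete modulo reading the cited theorem at its actual level of generality.

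Two further remarks on the machinery in your first part. The detour through the maximal-order property of $KH$ and Asano's theorem is sound in outline but unnecessary for this statement: once one knows there are no nontrivial reflexive two-sided ideals, one needs neither the group $G(KH)$ of fractional c-ideals nor its freeness on the prime c-ideals. (The paper does prove that $\Lambda_G$ and $\Lambda_H$ are maximal orders and does use Asano's theorem, but later, in $\S$\ref{CentQ} and $\S$\ref{PrimesG}, not here.) Moreover, your claim that the associated graded ring of $KH$ with respect to the augmentation filtration of an open normal uniform subgroup $N$ is the polynomial ring $K[X_1,\ldots,X_d]$ is only correct when $H$ itself is uniform; for $H$ merely torsionfree the degree-zero piece is $K[H/N]$ rather than $K$, and the maximal-order property requires the genuinely harder crossed-product descent of \cite[Corollary 4.7]{AB2}. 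Finally, you are right to be suspicious of the alternative route via faithful flatness of $\Omega_H \to KH$: a two-sided ideal of $KH$ need not be extended from $\Omega_H$, and this is precisely why the coefficient field must be built into the cited theorem from the start rather than recovered afterwards by base change.
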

\begin{proof} It is shown in \cite[Theorem 7.3]{AWZ} that $KH$ has no non-trivial reflexive two-sided ideals. As $I$ is non-zero, it follows that $\overline{I} = KH$. Hence $I^{-1} = \overline{I}^{-1} = KH$.
\end{proof}

\subsection{}\label{FgOverH} Here is our first application of Theorem \ref{Omega}.

\begin{thm} Let $I$ be a prime c-ideal of $\Lambda_G$. Then either $I = p\Lambda_G$ or $\Lambda_G / I$ is finitely generated over $\Lambda_H$.
\end{thm}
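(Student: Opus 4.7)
The plan is to split the analysis of $I$ according to whether $I \subseteq I_H$, the only point at which the structural results of \S\ref{Omega} and \S\ref{Appl} are essential.

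Suppose first that $I \subseteq I_H$. Since $\Lambda_G/I_H \cong \Omega_Z$ is a commutative domain, the Ore set defining $\Lambda_{G,H}$ coincides with $\Lambda_G \setminus I_H$, so $I$ misses it and $I\Lambda_{G,H}$ is a proper prime ideal of $\Lambda_{G,H}$. Standard Ore localisation theory for maximal orders guarantees that $I\Lambda_{G,H}$ is again reflexive, hence a prime c-ideal. Theorem \ref{Omega} supplies exactly the hypothesis of Proposition \ref{Appl} (applied with $n = 1$), which identifies $p\Lambda_{G,H}$ as the unique prime c-ideal of $\Lambda_{G,H}$. Thus $I\Lambda_{G,H} = p\Lambda_{G,H}$; contracting to $\Lambda_G$ gives $I \subseteq p\Lambda_G$, and the incomparability of distinct prime c-ideals of the maximal order $\Lambda_G$ (a consequence of Asano's unique factorisation in $G(\Lambda_G)$ recorded in \S\ref{Asano}) forces $I = p\Lambda_G$.

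Now suppose $I \not\subseteq I_H$ and choose $s \in I \setminus I_H$. Using $\Lambda_G = \Lambda_H[[T]]$ with $T = z - 1$ for a topological generator $z$ of $Z$, write $s = \sum_{n \geq 0} a_n T^n$ with $a_n \in \Lambda_H$. The image of $s$ in $\Lambda_G/I_H \cong \Omega_Z = \Fp[[T]]$ is nonzero, so some coefficient $a_n$ lies outside the kernel of the augmentation $\Lambda_H \to \Fp$; equivalently, $a_n \in \Lambda_H^{\times}$. Let $m$ be the least such index. Venjakob's noncommutative Weierstrass preparation theorem, applicable because $T$ is central in $\Lambda_G$ and $\Lambda_H$ is complete local with maximal ideal $\mathfrak{m}_H$, then factors $s = uf$ with $u \in \Lambda_G^{\times}$ and $f = T^m + b_{m-1}T^{m-1} + \cdots + b_0 \in \Lambda_H[T]$ with each $b_i \in \mathfrak{m}_H$. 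Hence $f = u^{-1}s \in I$, and using the relation $T^m \equiv -\sum_{i<m} b_i T^i \pmod{f}$ iteratively, together with $\mathfrak{m}_H$-adic convergence, one sees that $\Lambda_G/\Lambda_G f \Lambda_G$ is $\Lambda_H$-spanned by $1, T, \ldots, T^{m-1}$, so $\Lambda_G/I$ is finitely generated over $\Lambda_H$.

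The principal difficulty lies in the first case, where the deep structural input from \cite{AWZ} (channeled through Theorem \ref{Omega} and Proposition \ref{Appl}) does the essential work of forcing any prime c-ideal contained in $I_H$ to equal $p\Lambda_G$; I will also need to invoke, as a black-box from the theory of maximal orders, that Ore localisation sends reflexive prime ideals to reflexive prime ideals. The second case is comparatively routine, resting on the centrality of $T$ in $\Lambda_G$ and the noncommutative Weierstrass preparation theorem.
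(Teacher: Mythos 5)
Your proof is correct, and it follows the paper's own case division exactly: since $\Lambda_G/I_H \cong \Omega_Z$ is a commutative domain, your dichotomy ``$I \subseteq I_H$ or not'' is literally the paper's dichotomy ``$I \cap S = \emptyset$ or not'' for the Ore set $S$ of elements regular modulo $I_H$. In the first case your argument is the paper's: localise, invoke reflexivity of $I\Lambda_{G,H}$ (the paper cites \cite[Proposition 1.2(a)]{AWZ} for the black box you name), and apply Proposition \ref{Appl} together with Theorem \ref{Omega}. One small slip there: contracting $p\Lambda_{G,H}$ to $\Lambda_G$ directly gives $I = p\Lambda_{G,H}\cap\Lambda_G \supseteq p\Lambda_G$, i.e.\ $p \in I$, not the containment $I \subseteq p\Lambda_G$ you wrote (that reverse inclusion needs the extra observation that $p\Lambda_G$ is $S$-saturated because $\Omega_G$ is a domain). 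The cleanest finish, and the one the paper uses, is simply $p \in I$ followed by Lemma \ref{ModX}; your appeal to incomparability of prime c-ideals also works but is roundabout. Where you genuinely diverge is the second case: the paper disposes of it in one line by citing \cite[Propositions 2.3 and 2.6]{CFKSV}, which say that $S$-torsion finitely generated $\Lambda_G$-modules are exactly those finitely generated over $\Lambda_H$, whereas you reprove the relevant special case from scratch via Venjakob's noncommutative Weierstrass preparation: an element $s \in I\setminus I_H$ has some coefficient in $\Lambda_H^\times$, hence $I$ contains a distinguished polynomial $f$ of some degree $m$, and Weierstrass division shows $\Lambda_G/\Lambda_G f$ is spanned over $\Lambda_H$ by $1, T, \ldots, T^{m-1}$. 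This is correct (note only that one should quotient by the left ideal $\Lambda_G f$ and then pass to $\Lambda_G/I$, since $f$ need not be central), and it buys self-containedness and an explicit picture of why $\Lambda_G/I$ is small; the paper's citation buys brevity and applies to arbitrary $S$-torsion modules, not just cyclic ones.
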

\begin{proof} Let $S$ be the Ore set in $\Lambda_G$ that gives rise to the localisation $\Lambda_{G,H}$:
\[S = \{x \in \Lambda_G : x \mbox{ is regular modulo } I_H\}.\]
Suppose first that $I \cap S = \emptyset$. By \cite[Proposition 2.1.16(vii)]{MCR}, $I_S := I\cdot \Lambda_{G,H}$ is a prime ideal of $\Lambda_{G,H}$ and $I = I_S \cap \Lambda_G$. Because $I_S$ is reflexive by \cite[Proposition 1.2(a)]{AWZ}, it follows from Proposition \ref{Appl} and Theorem \ref{Omega} that $I_S = p\Lambda_{G,H}$. Hence $p \in I_S \cap \Lambda_G = I$, and therefore $I = p\Lambda_G$ by Lemma \ref{ModX}.

On the other hand, if $I \cap S \neq \emptyset$, then the right $\Lambda_G$-module $\Lambda_G / I$ is $S$-torsion and is therefore finitely generated over $\Lambda_H$ by \cite[Propositions 2.3 and 2.6]{CFKSV}.
\end{proof}

\subsection{Centre of $Q(\Lambda_H)$}\label{CentQ}

To obtain our second application of Theorem \ref{Omega}, we combine it with Proposition \ref{ModX}:

\begin{prop} Let $I$ be a prime c-ideal of $\Lambda_H$. Then $I = p\Lambda_H$.
\end{prop}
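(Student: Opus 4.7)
The plan is to apply Proposition \ref{ModX} directly, taking $R = \Lambda_H$ and $x = p$. We need to verify the hypotheses and then extract the result from Theorem \ref{Omega} specialized to $K = \Fp$.

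First I would check the running hypotheses of Proposition \ref{ModX}: $\Lambda_H$ is a Noetherian domain (it is even a maximal order by Lemma \ref{IwaMaxOrd}); the element $p$ is a non-zero central element of $\Lambda_H$; the quotient $\Lambda_H/p\Lambda_H$ is canonically identified with $\Omega_H$, which is a domain by \cite[Theorem 4.3]{AB2}; and $p$ lies in the Jacobson radical of $\Lambda_H$ (this is standard, since $\Lambda_H$ is a complete local ring with residue field $\Fp$). So the only non-trivial input required is the hypothesis on two-sided ideals of $T = \Omega_H$.

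Here is the key point: by definition $\Omega_H = \Fp[[H]]$, so if we let $K = \Fp$ in the notation of Theorem \ref{Omega}, then $KH = \Omega_H$. Since $\Fp$ is a field of characteristic $p$ and $H$ satisfies the standing hypotheses of $\S$\ref{Omega}, Theorem \ref{Omega} applies and yields that $J^{-1} = \Omega_H$ for every non-zero two-sided ideal $J$ of $\Omega_H$. This is exactly the condition needed.

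With all hypotheses verified, Proposition \ref{ModX} gives that $p\Lambda_H$ is the unique prime c-ideal of $\Lambda_H$, which is what we want. There is no real obstacle here — the work has been done in Theorem \ref{Omega} and Proposition \ref{ModX}, and the present proposition is essentially the specialization $K = \Fp$ of the former combined with the formal machine of the latter.
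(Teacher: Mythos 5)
Your proof is correct and matches the paper's (implicit) argument exactly: the paper introduces this proposition with the words ``we combine [Theorem \ref{Omega}] with Proposition \ref{ModX}'', which is precisely your specialization $R = \Lambda_H$, $x = p$, $T = \Omega_H = \Fp[[H]]$. All the hypothesis checks you carry out ($\Omega_H$ a domain, $p$ central and in the Jacobson radical, Theorem \ref{Omega} applied with $K = \Fp$) are the right ones.
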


We can now compute the centre of the skewfield of fractions $Q$ of $\Lambda_H$.

\begin{thm} The centre of $Q$ is equal to $\Qp$.
\end{thm}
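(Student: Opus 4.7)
The plan is to combine the preceding proposition (that the only prime $c$-ideal of $\Lambda_H$ is $p\Lambda_H$) with Asano's theorem from \S\ref{Asano}, and then to identify the centre of $\Lambda_H$ itself. Given a nonzero $z \in Z(Q)$, set $I := z\Lambda_H = \Lambda_H z$. Centrality gives $I^{-1} = \{q \in Q : qz \in \Lambda_H\} = z^{-1}\Lambda_H$, and then $(I^{-1})^{-1} = z\Lambda_H = I$; the symmetric calculation applies on the other side, so $I$ is a fractional $c$-ideal. Since $\Lambda_H$ is a maximal order by \ref{IwaMaxOrd}, Asano's theorem makes $G(\Lambda_H)$ free abelian on prime $c$-ideals; by the preceding proposition it is therefore infinite cyclic, generated by $p\Lambda_H$. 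Hence $I = p^n \Lambda_H$ for some $n \in \mathbb{Z}$, and $u := p^{-n}z$ is a unit of $\Lambda_H$ lying in $Z(Q) \cap \Lambda_H = Z(\Lambda_H)$.

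It now suffices to show $Z(\Lambda_H) = \Zp$, and here I would use the $p$-adic filtration. For any $w \in Z(\Lambda_H)$, the reduction $\bar w$ lies in $Z(\Omega_H)$; granting that $Z(\Omega_H) = \Fp$ (see below), choose a lift $c_0 \in \Zp$ of $\bar w$ and write $w - c_0 = p w_1$ with $w_1 \in \Lambda_H$. Because $w$, $c_0$, and $p$ are central and $p$ is a non-zero divisor in $\Lambda_H$, the element $w_1$ again lies in $Z(\Lambda_H)$. Iterating and invoking $p$-adic completeness of $\Lambda_H$ produces an expansion $w = \sum_{i \ge 0} c_i p^i \in \Zp$, which forces $u \in \Zp^\times$ and hence $z = p^n u \in \Qp$; the reverse inclusion $\Qp \subseteq Z(Q)$ is immediate.

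The main obstacle is thus the claim $Z(\Omega_H) = \Fp$. This should follow by running the same Asano-style argument inside $\Omega_H$: $\Omega_H$ is itself a maximal order in its skewfield, and \cite[Theorem 7.3]{AWZ} applied with $K = \Fp$ says that $\Omega_H$ has no nontrivial reflexive two-sided ideals, so $G(\Omega_H)$ is trivial and the reflexivity computation of the first paragraph (now inside $\Omega_H$) forces every central element of $Q(\Omega_H)$ to be a unit of $\Omega_H$. It still remains to identify the central units of $\Omega_H$ with $\Fp^\times$; since $\Omega_H$ is a local ring with residue field $\Fp$, this needs a further step, presumably a graded computation on a Lazard-type filtration of $\Omega_H$ that genuinely uses the split semisimplicity and torsionfreeness of $H$ to rule out central elements lying in the augmentation ideal.
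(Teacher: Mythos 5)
Your first paragraph is exactly the paper's argument: $\Lambda_H$ is a maximal order (Lemma \ref{IwaMaxOrd}), Asano's theorem writes the fractional c-ideal $q\Lambda_H$ as a product of prime c-ideals and their inverses, and the preceding proposition forces $q\Lambda_H = p^n\Lambda_H$, so that $p^{-n}q$ is a central unit of $\Lambda_H$. Where you diverge is in the remaining step, the identification $Z(\Lambda_H) = \Zp$, and here your proof has a genuine gap --- one you candidly flag yourself. The paper does not prove this identity from scratch; it quotes \cite[Corollary A]{A}, which says that since the hypotheses on $H$ force $Z(H)$ to be trivial, $Z(\Lambda_H) = \Zp$. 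That result is the content of a separate paper and is not something the Asano machinery can deliver.

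Concretely, your reduction of $Z(\Lambda_H) = \Zp$ to $Z(\Omega_H) = \Fp$ via the $p$-adic expansion $w = \sum c_i p^i$ is fine (centrality of $w_1$ does follow because $p$ is a central non-zero-divisor). But your proposed proof that $Z(\Omega_H) = \Fp$ does not close. Running the Asano/reflexive-ideal argument inside $\Omega_H$ only shows that a central element of $Q(\Omega_H)$ is a \emph{unit} of $\Omega_H$; since $\Omega_H$ is local with maximal ideal $\mathfrak{m}$ and unit group $\Fp^\times \cdot (1+\mathfrak{m})$, this leaves open the possibility of central units in $1 + \mathfrak{m}$, and no amount of information about two-sided ideals can exclude them. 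Ruling them out is precisely the hard computation of \cite{A} (a graded argument on the Lazard filtration using the adjoint action of the semisimple Lie algebra), which you gesture at ("presumably a graded computation") but do not carry out. So the proof is incomplete as written: either you must cite \cite[Corollary A]{A} as the paper does, or you must actually perform that graded computation.
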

\begin{proof} Recall that $\Lambda_H$ is a maximal order in $Q$ by Lemma \ref{IwaMaxOrd}. Let $q$ be a non-zero central element of $Q$. Then $q\Lambda_H$ is a fractional c-ideal and hence can be written as a product of prime c-ideals of $\Lambda_H$ (and their inverses) by Theorem \ref{Asano}. Using the proposition, we see that any such product must be equal to $p^n\Lambda_H$ for some $n\in\mathbb{Z}$. Hence $q = p^nr$ for some $r\in\Lambda_H$ and this $r$ must clearly be a central element of $\Lambda_H$. Our assumptions on $H$ force the centre of $H$ to be trivial, so $Z(\Lambda_H) = \Zp$ by \cite[Corollary A]{A}. Hence $q \in \Qp$.
\end{proof}

Fix a topological generator $g \in Z$ such that $Z = \overline{\langle g \rangle}$ and write $z = g - 1\in \Lambda_Z$.
Motivated in part by Theorem \ref{FgOverH}, we now move on to study the prime ideals in $\Lambda_G \cong \Lambda_H[[z]]$ that are finitely generated over $\Lambda_H$.

\subsection{Proposition}\label{ZorH} Let $I$ be a prime ideal in $\Lambda_G$ such that $\Lambda_G/I$ is finitely generated over $\Lambda_H$. Then either $I\cap\Lambda_H \neq 0$ or $I \cap \Lambda_Z \neq 0$.
\begin{proof}
Let $A_n := \bigoplus_{i=0}^n\Lambda_H z^i$ --- this is a finitely generated $\Lambda_H$-submodule of $\Lambda_G$ and we have an increasing chain
\[ \Lambda_H = A_0 \subset A_1 \subset A_2 \subset \cdots .\]
The image of this chain inside $\Lambda_G / I$ must stabilize since $\Lambda_G / I$ is a Noetherian $\Lambda_H$-module by assumption, and it follows that $I \cap A_n \neq 0$ for some $n$.

Let $n$ be minimal subject to having $I \cap A_n \neq 0$: note that if $n=0$ then $I \cap \Lambda_H \neq 0 $ and we're done. So assume that $n \neq 0$; we can then find some non-zero polynomial
\[a = a_nz^n + a_{n-1}z^{n-1} + \cdots + a_1z + a_0 \in I.\]
Note that $a_n \neq 0$ by the minimality of $n$. Writing $Q$ for the skewfield of fractions of $\Lambda_H$, we can form the polynomial ring $Q[z]$. Note that $Q(I\cap \Lambda_H[z])$ is a two-sided ideal of $Q[z]$ by \cite[Proposition 2.1.16]{MCR}, and
\[a_n^{-1}a = z^n + (a_n^{-1}a_{n-1})z^{n-1} + \cdots + (a_n^{-1}a_1)z + a_n^{-1}a_0 \in Q(I\cap \Lambda_H[z]).\]
Let $u\in Q$ and consider the commutator $[u,a_n^{-1}a] = ua_n^{-1}a - a_n^{-1}au$. This lies in $Q(I\cap \Lambda_H[z])$ and has degree strictly smaller than $n$. Clearing denominators we obtain an element of $I \cap A_{n-1}$ which is zero by the minimality of $n$. Therefore $[u, a_n^{-1}a] = 0$ for any $u \in Q$ meaning that each $a_n^{-1}a_i$ is central in $Q$.  By Theorem \ref{CentQ}, $a_n^{-1}a_i\in\Qp$ for all $i < n$, so we can find some $m\geq 0$ such that $p^ma_n^{-1}a\in \Lambda_Z$.

Now $p^ma = a_n \cdot (p^ma_n^{-1}a) \in I$ and $I$ is prime. Because $\Lambda_Z$ is central in $\Lambda_G$ and $a_n \notin I$ (otherwise $I\cap \Lambda_H\neq 0$), we see that $I\cap \Lambda_Z \neq 0$.
\end{proof}

In the case when the ideal $I$ is actually a prime c-ideal, we will be able to refine the above. First, some preparatory results.

\subsection{Lemma}\label{GoingDown} Let $I$ be a prime ideal in $\Lambda_G$. Then
\begin{enumerate}[{(}a{)}]
\item $I\cap\Lambda_Z$ is a prime ideal in $\Lambda_Z$,
\item $J := I\cap\Lambda_H$ is a prime ideal in $\Lambda_H$,
\end{enumerate}
\begin{proof}
Let us remark that in the general setting of noncommutative rings, the inverse image of a prime ideal under a ring homomorphism need not be prime (or even semiprime!). This is clearly illustrated by the inclusion
\[\begin{pmatrix} k & k \\ 0 & k\end{pmatrix} \subset M_2(k)\]
for any field $k$. However in our setting there is enough commutativity to enable the obvious proofs of parts (a) and (b) to work. For part (b) let $A,B$ be ideals of $\Lambda_H$ such that $AB \subseteq J$; then $B\Lambda_G = B[[z]] = \Lambda_GB$ so $(A\Lambda_G)(B\Lambda_G) \subseteq J\Lambda_G \subseteq I$ and either $A\Lambda_G \subseteq I$ or $B\Lambda_G \subseteq I$ because $I$ is prime. Hence $A\subseteq J$ or $B\subseteq J$ whence $J$ is prime. The proof of part (a) is similar but easier.
\end{proof}

\subsection{Distinguished polynomials}\label{GoingUp}
Recall \cite[\S 7.1]{Wash} that a polynomial
\[f = a_n z^n + a_{n-1}z^{n-1} + \cdots + a_1z + a_0 \in \Zp[z]\]
is said to be \emph{distinguished} if $a_n = 1$ and $a_i \in p\Zp$ for all $i<n$.

\begin{lem}
Let $J\neq 0$ be an ideal in $\Lambda_Z$ such that $\Lambda_Z / J$ is $p$-torsionfree. Then
\begin{enumerate}[{(}a{)}]
\item $J$ is generated by a distinguished polynomial $f$,
\item if $J$ is a prime ideal of $\Lambda_Z$, then $J\Lambda_G = f\Lambda_G$ is a prime ideal of $\Lambda_G$.
\end{enumerate}
\end{lem}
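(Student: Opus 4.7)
For part (a), my plan is to use Weierstrass preparation and division in $\Lambda_Z = \Zp[[z]]$. Since $\Lambda_Z/J$ is $p$-torsionfree and nonzero, the $p$-adic separation of $\Lambda_Z$ forces $J \not\subseteq p\Lambda_Z$, so the image $\bar J$ of $J$ in the DVR $\Omega_Z = \Fp[[z]]$ is a nonzero proper ideal, equal to $z^n\Omega_Z$ for some $n \geq 1$. Lifting $z^n$ into $J$ and applying Weierstrass preparation in $\Lambda_Z$ produces a distinguished polynomial $f \in J$ of degree $n$. For the reverse inclusion $J \subseteq f\Lambda_Z$, Weierstrass division gives, for any $j \in J$, a decomposition $j = qf + r$ with $r \in \Zp[z]$ of degree less than $n$, so $r \in J$; its reduction $\bar r$ lies in $\bar J = z^n\Omega_Z$ but has degree less than $n$, forcing $\bar r = 0$ and $r \in p\Zp[z]$. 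Because $\Lambda_Z/J$ is $p$-torsionfree, $r/p \in J$, and iterating drives $r$ into $\bigcap_k p^k\Zp[z] = 0$.

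For part (b), the plan is to identify $\Lambda_G/f\Lambda_G$ with $\Lambda_H \otimes_{\Zp} R$, where $R := \Lambda_Z/f\Lambda_Z$, and then embed this quotient into a known Noetherian domain. Because $f \in \Zp[z]$ is central in $\Lambda_G = \Lambda_H[[z]]$, monic of degree $n$, with subleading coefficients in $p\Zp \subseteq \mathrm{Jac}(\Lambda_H)$, the classical iterative Weierstrass division argument transfers verbatim to the noncommutative ring $\Lambda_H[[z]]$: every $h \in \Lambda_G$ admits a unique decomposition $h = qf + r$ with $q \in \Lambda_G$ and $r \in \bigoplus_{i=0}^{n-1}\Lambda_H z^i$. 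This gives a ring isomorphism $\Lambda_G/f\Lambda_G \cong \Lambda_H \otimes_{\Zp} R$, with $R$ lying in the centre. Establishing this noncommutative Weierstrass division is the main technical point, but it is routine given the centrality of $f$ and the fact that its subleading coefficients lie in $\mathrm{Jac}(\Lambda_H)$.

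Since $J$ is prime, $R$ is a commutative domain, finite and free of rank $n$ over $\Zp$, and $L := R \otimes_{\Zp}\Qp$ is a degree-$n$ extension of $\Qp$. The image $\alpha$ of $z$ in $R$ is a root of the monic polynomial $f$, hence integral over $\Zp$, so $R = \Zp[\alpha] \hookrightarrow \mathcal{O}_L$. Tensoring with the flat $\Zp$-module $\Lambda_H$ preserves inclusions, and because $\mathcal{O}_L$ is finite free over $\Zp$, a straightforward inverse-limit computation identifies $\Lambda_H \otimes_{\Zp}\mathcal{O}_L$ with the completed group algebra $\mathcal{O}_L[[H]] := \varprojlim_N \mathcal{O}_L[H/N]$. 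To conclude, it suffices to show $\mathcal{O}_L[[H]]$ is a domain. Letting $\pi$ be a uniformizer of $\mathcal{O}_L$ with residue field $\mathbb{F}_q$, the ring $\mathcal{O}_L[[H]]$ is $\pi$-adically complete and separated, with $\pi$-adic associated graded isomorphic to $\mathbb{F}_q[[H]][t]$. By the proof of \cite[Theorem C]{AB1} recalled in the proof of Proposition \ref{Appl}, $\mathbb{F}_q[[H]]$ is a Noetherian domain, so its polynomial ring is too, and the standard initial-form argument lifts this to give that $\mathcal{O}_L[[H]]$ is a domain. Hence $\Lambda_G/f\Lambda_G \hookrightarrow \mathcal{O}_L[[H]]$ realizes the quotient as a domain, proving that $f\Lambda_G$ is prime.
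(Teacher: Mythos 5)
Your proof is correct, and its endgame coincides with the paper's: embed $\Lambda_G/f\Lambda_G$ into the completed group algebra $\mathcal{O}[[H]]$ over the ring of integers $\mathcal{O}$ of the finite extension $L$ of $\Qp$ generated by $\Lambda_Z/J$, and use that this is a domain. The difference is the route to the embedding. The paper gets it in one stroke: writing $\Lambda_G \cong \invlim \Lambda_Z[H/N]$, the sequences $0 \to \Lambda_Z[H/N] \stackrel{f}{\to} \Lambda_Z[H/N] \to \mathcal{O}[H/N]$ are exact, and left-exactness of inverse limits yields $0 \to \Lambda_G \stackrel{f}{\to} \Lambda_G \to \mathcal{O}[[H]]$, i.e.\ the desired injection $\Lambda_G/f\Lambda_G \hookrightarrow \mathcal{O}[[H]]$. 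This entirely sidesteps what you flag as your main technical point, namely the noncommutative Weierstrass division in $\Lambda_H[[z]]$ and the identification $\Lambda_G/f\Lambda_G \cong \Lambda_H \otimes_{\Zp}(\Lambda_Z/f\Lambda_Z)$; that division does go through (it is the central special case of Venjakob's noncommutative preparation theorem, and your convergence argument rests correctly on $p$-adic completeness of $\Lambda_G$ rather than merely on the subleading coefficients lying in the Jacobson radical), but it is machinery you do not need. Similarly, the paper simply cites \cite[Proposition 2.5(iii)]{A2} for $\mathcal{O}[[H]]$ being a domain, where you reprove it via the $\pi$-adic associated graded ring $\mathbb{F}_q[[H]][t]$ --- a valid and more self-contained argument, using the standing hypothesis that $H$ is torsionfree. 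Your proof of part (a) is the standard deduction from Weierstrass preparation that the paper leaves as well known, and it is complete: in particular the observation that $p$-torsionfreeness of $\Lambda_Z/J$ plus $p$-adic separatedness forces $J \not\subseteq p\Lambda_Z$, and the iteration that kills the remainder, are exactly what is needed.
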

\begin{proof} (a) This is a well-known consequence of the $p$-adic Weierstrass preparation theorem \cite[Theorem 7.3]{Wash}.

(b) Let $K$ denote the field of fractions of the domain $\Lambda_Z/J$. $K$ is a finite field extension of $\Qp$ because $\Lambda_Z/J$ is finitely generated over $\Zp$. By part (a), the image of $z$ in $K$ satisfies a monic polynomial with coeffcients in $\Zp$, so $\Lambda_Z/J$ is contained in the ring of integers $\mathcal{O}$ of $K$.

Now $\Lambda_G$ is isomorphic to the completed group ring of $H$ with coefficients in $\Lambda_Z$:
\[\Lambda_G \cong \lim\limits_{\longleftarrow}\!_{N\triangleleft_oH}\Lambda_Z[H/N]\]
and for every open normal subgroup $N$ of $H$ we have an exact sequence
\[0 \to \Lambda_Z[H/N] \stackrel{f}{\to} \Lambda_Z[H/N] \to \mathcal{O}[H/N].\]
Since inverse limits are left exact, we obtain an exact sequence
\[0 \to \Lambda_G \stackrel{f}{\to} \Lambda_G \to \mathcal{O}[[H]]\]
where $\mathcal{O}[[H]]$ denotes the completed group ring of $H$ with coefficients in $\mathcal{O}$. The latter is a domain by \cite[Proposition 2.5(iii)]{A2} so $\Lambda_G/f\Lambda_G$ is also a domain and $f\Lambda_G$ is a prime ideal.
\end{proof}

We are now ready to give proofs of our main results.

\subsection{Theorem}\label{PrimesG} Let $I$ be a prime c-ideal of $\Lambda_G$. Then $I = f\Lambda_G$ where $f = p$ or $f \in \Zp[z]$ is a distinguished polynomial which is irreducible in $\Lambda_Z$.
\begin{proof} Suppose that $J := I \cap \Lambda_H$ is non-zero. $J$ is prime by Lemma \ref{GoingDown}(b) and reflexive by \cite[Proposition 1.2(b)]{AWZ}, so $J = p\Lambda_H$ by Proposition \ref{CentQ}. Hence $p\Lambda_G \subseteq I$. Because $\Lambda_G /p\Lambda_G = \Omega_G$ is a domain by \cite[Theorem 4.3]{AB2}, $I = p\Lambda_G$ by Lemma \ref{ModX}.

Now suppose that $J = 0$; in particular, $p \notin I$. By Theorem \ref{FgOverH}, $\Lambda_G/I$ is finitely generated over $\Lambda_H$, so Proposition \ref{ZorH} implies that $I\cap \Lambda_Z \neq 0$. Because $p\notin I$, Lemma \ref{GoingUp}(a) implies that $I\cap\Lambda_Z = f\Lambda_Z$ for some distinguished polynomial $f\in\Zp[z]$. The polynomial $f$ must be irreducible in $\Lambda_Z$ by Lemma \ref{GoingDown}(a). Now $f\Lambda_G$ is a prime ideal in $\Lambda_G$ by Lemma \ref{GoingUp}(b), so $I = f\Lambda_G$ by Lemma \ref{ModX}.
\end{proof}

\begin{cor} Let $I$ be a reflexive two-sided ideal of $\Lambda_G$. Then $I = f\Lambda_G$ for some $f \in \Zp[z]$.
\end{cor}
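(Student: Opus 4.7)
The plan is to combine two facts from the preceding material: Asano's theorem (Theorem \ref{Asano}), which says that the fractional c-ideals of a maximal order form a free abelian group on the prime c-ideals, together with the explicit classification of prime c-ideals of $\Lambda_G$ just given in Theorem \ref{PrimesG}.

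First, I would observe that $\Lambda_G$ is a maximal order by Lemma \ref{IwaMaxOrd} and that the reflexive two-sided ideal $I \subseteq \Lambda_G$ is in particular a c-ideal in the sense of \S \ref{Frac}. Asano's theorem then yields a decomposition
\[ I = P_1 \cdot P_2 \cdots P_k \]
(product in $G(\Lambda_G)$, with repetitions allowed) into prime c-ideals $P_i$, and by Theorem \ref{PrimesG} each $P_i$ is of the form $f_i \Lambda_G$ with $f_i \in \Zp[z]$ --- either $f_i = p$ or $f_i$ is a distinguished polynomial.

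The second step is the observation that because each $f_i$ lies in the central subring $\Lambda_Z$, the Asano product collapses to the ordinary ideal product: for a central non-zero-divisor $g \in \Lambda_G$, the principal ideal $g\Lambda_G$ is already reflexive (it is free of rank one as a $\Lambda_G$-bimodule), so $(g\Lambda_G)\cdot(h\Lambda_G) = gh\Lambda_G$ whenever $g,h$ are central. Iterating over the factorization gives $I = (f_1\cdots f_k)\Lambda_G = f\Lambda_G$ with $f := f_1\cdots f_k \in \Zp[z]$, as required.

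The only mildly technical point I foresee is checking that, because $I$ is integral rather than merely fractional, all exponents appearing in its Asano factorization are non-negative. This is part of the standard Dedekind-style formulation of Asano's theorem cited in the proof of Theorem \ref{Asano} and should not pose a serious obstacle.
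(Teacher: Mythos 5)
Your proposal is correct and follows exactly the route the paper takes: its proof of this corollary is literally the citation of Lemma \ref{IwaMaxOrd}, Theorem \ref{Asano} and Theorem \ref{PrimesG}, and you have simply written out the details (the collapse of the Asano product $\overline{IJ}$ to the ordinary product for centrally generated principal ideals, and the non-negativity of the exponents for an integral c-ideal) that the paper leaves implicit. Both of those supplementary points are handled correctly, so nothing further is needed.
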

\begin{proof}
Apply Proposition \ref{IwaMaxOrd}, Theorem \ref{Asano} and Theorem \ref{PrimesG}.
\end{proof}

\subsection{Proof of Theorem \ref{Main}}\label{Proof} Recall from Lemma \ref{IwaMaxOrd}
that $\Lambda_G$ is a maximal order. By \cite[Proposition
4.2.2]{Ch}, $q(M)$ decomposes uniquely in the quotient category
$\mathcal{M}/\mathcal{C}$ as a direct sum $q(M) = F \oplus B$ of a
completely faithful object $F$ and a \emph{locally bounded} object
$B$. Let $N$ be the $\Lambda_Z$-torsion submodule of $M$; we will
first show that $B = q(N)$.

Since $B$ is a subobject of $q(M)$, we can find a submodule $X$ of
$M$ such that $q(X) \cong B$ by general properties of quotient
categories. It will be convenient to identify $q(X)$ with $B$. Now,
since $X_o \leq M_o = 0$, the annihilator ideal $I := \Ann(B)$
equals $\Ann(X)$ by \cite[Lemma 2.5]{Rob}. Because $X$ is locally
bounded, $I \neq 0$ and therefore $X$ is a finitely generated,
torsion, bounded $\Lambda_G$-module. Therefore $I = \Ann(q(X))$ is a
non-zero reflexive two-sided ideal of $\Lambda_G$ by \cite[Lemma
5.3(i)]{CSS}, and hence $I = f\Lambda_G$ for some non-zero $f \in
\Lambda_Z$ by Corollary \ref{PrimesG}. Therefore $X \leq N$ and $B =
q(X) \leq q(N)$. On the other hand, $N$ is bounded because
$\Lambda_Z$ is central in $\Lambda_G$, so $q(N) \leq B$.

Now $q(M)$ is completely faithful if and only if $B = 0$, and $B =
0$ if and only if $N = 0$ because $N_o \leq M_o = 0$. \qed

\end{document}